\def\tr{\operatorname{Tr}}
\def\ev{\operatorname{ev}}
\def\size#1{\mathbf S\left(#1 \right)}
\def\poly{\operatorname{\emph{Poly}}}
\let\polishL=l
\newcommand{\Z}{\mathbb{Z}}
\newcommand{\Q}{\mathbb{Q}}
\newcommand{\R}{\mathbb{R}}
\newcommand{\C}{\mathbb{C}}
\renewcommand{\P}{\mathbb{P}}
\newtheorem{proposition}{Proposition}
\newtheorem{corollary}{Corollary}
\newtheorem{theorem}{Theorem}
\newtheorem{lemma}{Lemma}
\theoremstyle{definition}
\newtheorem{definition}{Definition}
\providecommand{\norm}[1]{\lVert#1\rVert}
\numberwithin{equation}{section}
\numberwithin{proposition}{section}
\numberwithin{corollary}{section}
\begin{document}
\title
 { On the number of zeros of Melnikov functions }

\author
 {Sergey Benditkis,
  Dmitry Novikov
 }

\address{Weizmann Institute of Science\\Rehovot\\Israel}
\email{\tt sergey.benditkis\@weizmann.ac.il, dmitry.novikov\@weizmann.ac.il}
\thanks{This research was supported by  Israel Science Foundation (grant No. 1501/08).}
\subjclass[2000]{Primary 34C07, 34C08; Secondary  37F75, 32S65}

\begin{abstract}
    We provide an effective uniform upper bond for the number of zeros of the first non-vanishing Melnikov function of a polynomial perturbations of a planar polynomial Hamiltonian vector field. The bound depends on degrees of the field and of the perturbation, and on the order $k$ of the Melnikov function. The  generic case $k=1$ was considered by  Binyamini, Novikov and Yakovenko (\cite{BNY-Inf16}).  The bound follows from an effective construction of the Gauss-Manin connection for iterated integrals.
\end{abstract}
\setcounter{tocdepth}{2}

\maketitle

%\tableofcontents
%\pagebreak
\section{Introduction}
\subsection{Infinitesimal Hilbert 16th problem}
The second part of 16th Hilbert problem asks: How many limit cycles may have a planar polynomial vector field? The question has a long history, and was at the origin of several theories, see \cite{centennial}). The most remarkable achievement, Ecalle-Ilyashenko theorem, claims that the number of limit cycles is finite for any individual vector field, see \cite{ecalle:book,ilyashenko:finiteness}. However, existence of a uniform upper bound for this number  even for quadratic vector fields is an open problem.

A weaker form of the same question concerns perturbations of Hamiltonian vector fields.
Let $H(x,y)$ be a bivariate polynomial (further called Hamiltonian). The corresponding Hamiltonian system
can be written in Pfaffian form as
\begin{equation}\label{unperturbed}
    dH = 0.
\end{equation}
Consider its polynomial perturbation
\begin{equation}\label{perturbed}
    dH+\varepsilon\omega = 0,\quad \text{where}\quad  \omega = P(x, y)dx + Q(x, y)dy, \quad P,Q\in \R[x,y],
\end{equation}
and $\varepsilon \in (\R^1, 0)$.
%We will always assume that $\deg\omega<\deg H$. This restriction does not seem to be crucial, as  general technology  of Petrov as developed by Khovanskii
%allow to extend this to forms of any degree, see \cite{Zol}.

Consider a nest of cycles $\left\{\delta_t\subset\{H=t\}, t\in[a,b]\subset\R\right\}$ of (\ref{unperturbed}). We ask how many limit cycles of (\ref{perturbed}) converge to this nest as $\varepsilon\to 0$.

It is easy to see that  closed trajectories $\delta_t$ that survive after the perturbation should produce zero value of  the Poincar\'e integral (aka first Melnikov function)
$$
    I = I(\delta_t, \omega) = \oint_{\delta_t}\omega,
$$
the so called Poincar\'e-Andronov-Pontryagin criterion, see \cite[\S 26A]{iy:lade}. Therefore estimates on the number of zeros of this so-called Abelian integral have direct relation to the Hilbert 16th problem.  Binyamini, Novikov, Yakovenko studied the case of \emph{non-conservative} perturbations, namely, when the Poincar\'e integral does not vanishes identically.
\begin{theorem}\cite{BNY-Inf16}\label{thm:BNY}
Assume that $I\not\equiv 0$ for the nest of cycles of (\ref{unperturbed}). Assume that $\deg\omega<\deg H$. Then the number of cycles $\delta_t$ providing the zero value of Poincar\'e integral is at most $2^{2^{P(\deg H)}}$, where $P(n)$ is some explicit polynomial of degree at most 60.
\end{theorem}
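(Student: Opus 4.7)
The plan is to realize the Poincaré integral $I(t) = \oint_{\delta_t}\omega$ as a component of a vector-valued function satisfying an explicit linear system of first-order ODEs with rational coefficients on $\C P^1$ (a Picard--Fuchs, or Gauss--Manin, system) and then to apply effective zero-counting bounds for solutions of such systems.

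First, I would identify a finite-dimensional $\C[t]$-module containing $I(t)$ that is stable under $\partial_t$. The standard construction takes the space of polynomial $1$-forms on $\C^2$ of degree at most some explicit bound, modulo forms of the type $df + g\, dH$. Following the line of Gavrilov and Bonnet--Fran\c{c}oise, this quotient is a free $\C[t]$-module of rank $\mu$ bounded polynomially in $\deg H$ and $\deg \omega$, and the derivation $\partial_t$ acts on it through the Gelfand--Leray residue. This yields a system
\[
    \partial_t \mathbf{X} = A(t)\mathbf{X}, \qquad A(t) \in \mathrm{Mat}_{\mu}(\C(t)),
\]
whose poles lie over the critical values of $H$ and $\infty$, and whose degrees and heights are effectively controlled in terms of $\deg H$. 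The reduction to canonical basis, done by polynomial division modulo the Jacobian ideal, has to be made fully quantitative --- this is where most of the combinatorial cost is paid.

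Next, I would invoke the Picard--Lefschetz formula: the monodromy of the system is quasi-unipotent, and there are only finitely many Fuchsian singularities, all located at critical values of $H$ and at infinity. This structural fact provides effective bounds on the slope of the system, on the minimal distance from the real segment $[a,b]$ to the singular locus, and on the growth of a fundamental matrix along a suitable contour thickening $[a,b]$ inside $\C$. Here one must separately control the dangerous degeneracy in which several critical values of $H$ coalesce, where the naive estimates blow up; this is handled by a preliminary normalization of $H$ and by quantitative versions of the confluence theorems for Fuchsian systems.

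Finally, I would apply the Petrov-style zero-counting machinery for Fuchsian systems developed by Roitman--Yakovenko and Binyamini--Novikov--Yakovenko: the number of isolated zeros of any component of a vector solution on a real interval bounded away from the singular locus is controlled by a function of the slope of the system and the magnitudes of its coefficients, via the variation-of-argument principle combined with Bernstein-type inequalities along a Riemann domain covering $[a,b]$. Feeding the effective estimates on $A(t)$ from the first two steps into this machinery produces the double-exponential bound $2^{2^{P(\deg H)}}$, the polynomial $P$ being the record of how degree and height data are amplified at each reduction. The main obstacle, and the source of the large exponent $60$, is precisely the quantitative bookkeeping through the Gauss--Manin construction and the passage near confluent singularities.
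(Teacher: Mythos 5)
Your overall skeleton---realize $I(t)$ as an entry of a fundamental solution of an explicit Picard--Fuchs system with controlled degree and height, then feed it into an effective zero-counting theorem---is the right family of ideas, and it matches how the present paper assembles Theorem~\ref{thm:BNY} from Theorem~\ref{th:non-deg-AI} and Theorem~\ref{th:upper-bound}. But there is a genuine gap at the step you yourself flag as ``dangerous'': confluence of singularities. You formulate the system in the single variable $t$ for a fixed Hamiltonian $H$ and propose to control the degeneration where critical values of $H$ collide by ``a preliminary normalization of $H$'' and ``quantitative versions of the confluence theorems for Fuchsian systems.'' No such quantitative confluence theorem is available in the form you need, and this is precisely why the one-variable Petrov/Roitman--Yakovenko machinery was stuck for years: its bounds depend on the conformal geometry of the singular locus of the restricted system (mutual distances between the poles of $A(t)$, norms of the residue matrices), and these quantities are \emph{not} uniformly bounded over all $H$ of a fixed degree. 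A normalization of $H$ cannot remove the strata where several critical values genuinely coalesce, so your argument yields a bound for each fixed $H$ but not the uniform bound $2^{2^{P(\deg H)}}$ claimed.

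The actual resolution in \cite{BNY-Inf16} is structurally different at exactly this point: one works with the Gauss--Manin connection as a single \emph{integrable} rational matrix $1$-form $\Omega_1$ over the whole projectivized space of Hamiltonians $P\C_{n+1}[x,y]$, defined over $\Q$ with explicitly bounded dimension $\ell=n^2$, degree $d=O(n^2)$ and size $s\le 2^{\poly(n)}$ as in \eqref{bounds-pfs}. Regularity and quasiunipotence are then needed at \emph{every} point of the compactified parameter space, including the deep strata where singularities collide; quasiunipotence there is obtained from Picard--Lefschetz on the generic (codimension-one) strata plus Kashiwara's theorem, not from any confluence analysis. The counting statement (Theorem~\ref{th:upper-bound}) is about the number of zeros of $\tr BX$ in triangles lying in $\ell\setminus\Sigma$ for an \emph{arbitrary} line $\ell$ in the parameter space---applied to the lines $\{\tilde\lambda\}\times\C_t$ this gives the uniform bound, and the exponent $60$ arises as $(d\ell^4 m)^5$ with $d,\ell,m$ polynomial in $n$, not from bookkeeping near confluent singularities. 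Without this passage to the multiparameter integrable connection (or some substitute for it), your proposal does not close.
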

This upper bound serves also as an upper bound for the cyclicity of an open nest of the limit cycles (which is defined as a supremum of cyclicities of all closed subnests of the open nest, see e.g. \cite{novikov-gavrilov-2008}).

For generic Hamiltonians  identical vanishing of $I$ implies exactness of $\omega$ (again, assuming $\deg\omega<\deg H$), so the perturbation remains integrable, see \cite{I69}. However, for degenerate Hamiltonians one has to consider Melnikov functions of higher order.

\subsection{Melnikov functions and the main theorem}
\begin{definition}\label{def:Melnikov}
For a cycle $\delta$ of (\ref{unperturbed}) choose a transversal $\sigma$ with coordinate $z$ chosen in such a way that $\delta$ intersect $\sigma$ at $z=0$.

Denote by $ \Delta:\sigma\to\sigma$ the \emph{holonomy map} of cycle $\gamma$ considered as a function of the parameters $h, \varepsilon$.
Being analytic function of its arguments,  $\Delta$ can be expanded in the converging series
\begin{equation}\label{eq:M_k def}
   \Delta(z,\varepsilon) = z+\varepsilon M_1(z)+\dots+\varepsilon^K M_K(z)+\dots,
\end{equation}
where $M_k(z)$ are real analytic functions defined in some common neighborhood of the origin $z=0$.
The function $M_k$ is called $k$-th Melnikov function.
\end{definition}

Assume that the first nonzero function $M_k(z)$  has $N$ isolated zeroes (counted with their multiplicities) in the closed interval $\{|z|\leq\rho\}$.
\begin{proposition}\label{zeroes-i-k}\cite[Proposition 26.1]{iy:lade}
There exists a small positive value $r>0$ such that for all $|\varepsilon|<r$ the foliation \eqref{perturbed}  has no more than $n$ limit cycles intersecting $\sigma$ at $\{|h|\leq\rho\}$.
\end{proposition}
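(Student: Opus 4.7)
The plan is to identify limit cycles of the perturbed foliation with fixed points of the holonomy map $\Delta$, factor out the leading $\varepsilon^k$ from the displacement function, and then conclude by Rouché's theorem that for small $\varepsilon$ the number of fixed points cannot exceed the number of zeros of $M_k$.

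Concretely, a limit cycle intersecting $\sigma$ near the nest corresponds to a fixed point of $\Delta(\cdot,\varepsilon)$, i.e.\ a zero of the displacement function $D(z,\varepsilon):=\Delta(z,\varepsilon)-z$. By hypothesis $M_1\equiv\cdots\equiv M_{k-1}\equiv 0$, so the expansion \eqref{eq:M_k def} gives
\begin{equation*}
  D(z,\varepsilon)=\varepsilon^k\bigl(M_k(z)+\varepsilon\,R(z,\varepsilon)\bigr),
\end{equation*}
where $R$ is analytic in both variables in a common neighborhood of $\{|z|\le\rho\}\times\{0\}$. For $\varepsilon\ne 0$ limit cycles therefore correspond to zeros of $F(z,\varepsilon):=M_k(z)+\varepsilon R(z,\varepsilon)$.

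Next I would complexify: $M_k$ extends to a holomorphic function in a complex neighborhood $U$ of $[-\rho,\rho]$, and so does $R$ (after possibly shrinking $U$ and the allowed range of $\varepsilon$). Let $z_1,\dots,z_m$ be the distinct zeros of $M_k$ lying in $\{|z|\le\rho\}$, with multiplicities $n_1,\dots,n_m$ summing to $N$. Choose pairwise disjoint small closed disks $\overline{B_j}\subset U$ around each $z_j$ on whose boundaries $M_k$ is nowhere zero; enlarge slightly to include a complex neighborhood of $[-\rho,\rho]$ if a $z_j$ sits on the endpoint. On the compact set $K:=\overline{U}\setminus\bigcup_j B_j$ the function $M_k$ is bounded below in absolute value by some $c>0$, while on each $\partial B_j$ we likewise have $|M_k|\ge c_j>0$. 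Since $R$ is bounded on $\overline U\times\{|\varepsilon|\le\varepsilon_0\}$, there is $r>0$ such that for $|\varepsilon|<r$ one has $|\varepsilon R(z,\varepsilon)|<|M_k(z)|$ on each $\partial B_j$ and on $K$.

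By Rouché's theorem, $F(\cdot,\varepsilon)$ then has exactly $n_j$ zeros (counted with multiplicity) inside $B_j$ and no zeros on $K$. Summing gives at most $N$ complex zeros in a neighborhood of $[-\rho,\rho]$, hence a fortiori at most $N$ real zeros, i.e.\ at most $N$ limit cycles of \eqref{perturbed} meeting $\sigma$ in the prescribed interval. The only delicate point is the treatment of zeros of $M_k$ lying on the boundary $\{|z|=\rho\}$: there one must take the disks $B_j$ slightly thicker than $[-\rho,\rho]$ and verify that the Rouché count inside $B_j$ still bounds the number of \emph{real} zeros in $[-\rho,\rho]\cap B_j$, which is immediate since real zeros of an analytic function contribute to its complex zero count with at least the same multiplicity.
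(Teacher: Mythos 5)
The paper does not actually prove this statement: it is quoted from Ilyashenko--Yakovenko \cite[Proposition 26.1]{iy:lade} (the ``$n$'' in the statement is a typo for the $N$ of the preceding sentence), and your argument --- identifying limit cycles with zeros of the displacement function, factoring out $\varepsilon^k$ to get $M_k(z)+\varepsilon R(z,\varepsilon)$, and applying Rouch\'e's theorem on a complex neighborhood of $[-\rho,\rho]$ --- is exactly the standard proof given in that reference. The argument is correct; the one point to make explicit is that $U$ must be taken thin enough that $\overline{U}$ contains no zeros of the holomorphic extension of $M_k$ other than the $z_j\in[-\rho,\rho]$ (otherwise $|M_k|$ need not be bounded below on $K$), which your clause ``after possibly shrinking $U$'' is meant to cover, and one should also note that the holonomy $\Delta$ is a monotone real map, so every limit cycle meeting $\sigma$ in the prescribed range yields a genuine fixed point rather than a periodic point of higher period.
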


Our main result provides an upper bound for the number of isolated zeros of the  first non-zero Melnikov function.
\begin{theorem}\label{num-of-zeroes-m-k} The number of isolated zeroes of the first non-zero Melnikov function $M_K$ is bounded by $2^{2^{d^{O(1)}n^{O(K)}}}$, where $n+1=\deg H$, $d=\deg\omega$, and the absolute constants in $O(1),O(K)$ can be explicitly computed.
\end{theorem}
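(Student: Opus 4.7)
The plan is to mimic the strategy of \cite{BNY-Inf16} but applied to iterated integrals instead of simple Abelian integrals. The bulk of the work is to construct, with explicit size estimates, a Picard--Fuchs system satisfied by $M_K$, after which the complex-analytic zero-counting machinery of BNY can be applied essentially as a black box.

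First, I would invoke the classical Fran\c coise--Gavrilov--Iliev iterative procedure: under the hypothesis $M_1\equiv\dots\equiv M_{K-1}\equiv 0$, the function $M_K(t)$ can be written as a $\Q$-linear combination of iterated path integrals $\oint_{\delta_t}\eta_1\eta_2\cdots\eta_K$, where each $\eta_j$ is a polynomial 1-form whose degree is bounded by an explicit polynomial in $d$ and $n$. This reduces the theorem to bounding the number of zeros of a particular iterated integral of depth $K$.

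Next I would develop an \emph{effective Gauss--Manin connection} for such iterated integrals. Fix the finite-dimensional $\Q[t]$-module $\mathcal{M}_K$ spanned by all iterated integrals $\oint_{\delta_t}\eta_{i_1}\cdots\eta_{i_\ell}$ of depth $\ell\leq K$ whose 1-forms lie in a fixed basis of polynomial forms of controlled degree. Using a relative de Rham argument (writing $\eta_j=df_j+g_j\,dH$ on the zero locus of $H-t$, iterated $K$ times, with denominators bounded by powers of the discriminant of $H$), one shows that $\mathcal{M}_K$ is closed under $\partial_t$ and has rank at most $n^{O(K)}$. This yields a Fuchsian system $\partial_t Y=A(t)Y$ whose size (dimension, location and order of poles, heights of rational entries) is polynomial in $d$ and $n^K$, and whose monodromy is quasi-unipotent with controlled exponents at infinity and at critical values of $H$.

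Once this system is in hand, the theorem follows by feeding it into the complex-analytic counting scheme of \cite{BNY-Inf16}: for a Fuchsian system of controlled size defined over $\Q(t)$, the number of isolated zeros of any component of a solution in a compact real segment avoiding singularities is bounded doubly exponentially in the size. Substituting the size estimate $d^{O(1)}n^{O(K)}$ from the previous step gives the claimed bound $2^{2^{d^{O(1)}n^{O(K)}}}$.

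The main obstacle is the middle step: producing the Gauss--Manin connection on iterated integrals with \emph{effective} estimates. Unlike the depth-one case, where Petrov's classical basis for Brieskorn lattices gives tight degree bounds, for depth $K$ one must iteratively solve the relative cohomological equation $\eta=df+g\,dH$ while keeping track of how much the denominators and degrees of $f,g$ grow at each iteration, and then bound the rank of the resulting module by a closure argument. The degeneration of the Hamiltonian (critical values colliding, Milnor fibers becoming singular) must be controlled uniformly; this is where one expects the factor $n^{O(K)}$ in the exponent rather than a polynomial in $K$ alone, and where the effectivity is genuinely delicate.
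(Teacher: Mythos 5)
Your overall architecture matches the paper's: reduce $M_K$ to iterated integrals via the Fran\c coise-type induction, build an effective Gauss--Manin connection for iterated integrals by iterating the Petrov-module decomposition, check regularity and quasi-unipotency, and feed the result into the counting theorem of \cite{BNY-Inf16}. However, as written the proposal has two genuine gaps.

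First, uniformity over the Hamiltonian. You propose a single-variable Fuchsian system $\partial_t Y=A(t)Y$ ``defined over $\Q(t)$'' with controlled heights, for the fixed $H$. For a general Hamiltonian of degree $n+1$ the entries of $A(t)$ involve the coefficients of $H$, so the system is not over $\Q$ and its ``size'' is not bounded in terms of $n,d,K$ alone; a BNY-type bound applied to it would depend on the height of $H$ and could not yield the uniform estimate $2^{2^{d^{O(1)}n^{O(K)}}}$. The cure --- and this is the actual content of Section~\ref{sec:construction} --- is to treat the coefficients $\lambda$ of $H$ as variables and construct an integrable connection $\widetilde{\Omega}_K$ over the whole parameter space (lifted to $B_n$ as in \eqref{def:Bn}), with entries rational over $\Q$ in $\lambda$ of explicitly bounded degree and size (Propositions~\ref{prop:tildeOmega K}, \ref{prop:II sys compl}); only then does Theorem~\ref{th:upper-bound}, which counts zeros on lines in the parameter space, give a bound depending only on $n,d,K$.

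Second, the base-point dependence of iterated integrals. Unlike Abelian integrals, iterated integrals of length $\ge 2$ depend on the initial point $p$ of the loop, and your module $\mathcal{M}_K$ is not closed under $\partial_t$ (nor under the relative cohomology reduction $\eta=df+g\,dH$): integration by parts produces boundary terms $g(p)\oint\cdots$, so the coefficients of the resulting linear combinations are polynomials in $p$, and $p=p(\lambda)$ is an algebraic function of the Hamiltonian. This is why the paper must (i) lift the connection to the cover $B_n$ where $p$ becomes a coordinate, (ii) prove Lemma~\ref{lem:Mk as lincomb} giving $M_K=\sum h_iI_i$ with $h_i$ rational in $p$ of degree $2^{O(K)}dn^6$, and (iii) add the extra step of Lemma~\ref{lem:poly env}, a Kronecker sum with the connection whose sections are polynomials in $p$, before Theorem~\ref{th:upper-bound} applies. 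Your claim that $M_K$ is a \emph{$\Q$-linear} combination of iterated integrals skips exactly this difficulty. Relatedly, quasi-unipotency is asserted rather than proved; it is not automatic for the iterated-integral connection and in the paper follows from the block-triangular structure and Lemma~\ref{kroneker}, which identifies the diagonal blocks' monodromy with tensor powers of the (quasiunipotent) Abelian-integral monodromy.
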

This bound is certainly not exact, and construction of lower bounds is a difficult problem, unsolved even for Abelian integrals.

Note that the order $K$ of the  first non-zero Melnikov function cannot be easily bounded in terms of degree of $H$: this problem includes, as a particular case, the center-focus problem.

\subsection{Iterated integrals and algebraic motivation}

It  is well-known, see \cite{gavrilov:iterated, iy:lade}, that $M_K$ can be represented as a linear combination of so-called \emph{iterated integrals} of order at most $K$.

\begin{definition}
Let $\gamma(s):[0,1]\to\C^2$ be parameterization of a curve $\gamma\subset \C^2$.  For a $k$-tuple of forms $\omega_1,...,\omega_k\in\Lambda^1(\C^2)$ we define the iterated integral as
$$
\int_\gamma\omega_1...\omega_k=\int_0^1\left(\int_0^{s_1}\left(...\left(\int_0^{s_k}\gamma^*\omega_k\right)\gamma^*\omega_{k-1}\right)...\right)\gamma^*\omega_1.
$$
\end{definition}
Iterated integrals were extensively studied from various points of view, see e.g. \cite{chen, harris, movasati}. Our goal is to investigate their oscillation properties.
Let us choose a straight line as a transversal to the nest of cycles.
Iterated integrals define functions on this transversal: to any point $p$ of the transversal corresponds the value of the iterated integral over the cycle of the foliation passing through it, with $p$ being the initial point of the path of integration (note that, unlike the Melnikov function, the iterated integrals do depend on the choice of the initial point of the cycle).

The main step of the proof of Theorem~\ref{num-of-zeroes-m-k} is an explicit construction of a meromorphic flat connection  whose horizontal sections are given by basic iterated integrals (see \eqref{Sk-def} for definition), a higher order analogue of the Gauss-Manin connection for Abelian integrals. We prove in Section~\ref{sec:properties} that this connection belongs to the class of connections considered in the paper of of Binyamini, Novikov and Yakovenko~\cite{BNY-Inf16}, see the next section for formulation of the result. Estimates on the complexity of the connection, proved in  Section~\ref{sec:construction}, allow to apply their main result not only to linear combinations of basic iterated integrals, but also to their combinations with coefficients polynomially  dependent on $z$ from \eqref{eq:M_k def}. In Section~\ref{sec:Mk as II} we represent $M_K$ in this form.

%%%%%%%%%%%%%%%%%%%%%%%%%%%%%

\section{Non-oscillation of horizontal sections of meromorphic connections}\label{sec:BNYresult}
In this section we briefly recall the main result of \cite{BNY-Inf16}.
Let  $\Omega$ be a rational $l\times l$-matrix of rational differential 1-forms on a complex manifold $M$, with a singular locus $\Sigma $. It defines a connection
\begin{equation}\label{picard-fuchs}
    dX = \Omega \cdot X
\end{equation}
 on trivial vector bundle $M\times \C^l$. We denote by $\Sigma$ the singular locus of the connection.
\subsection{Regular integrable connections}
\begin{definition}
The form $\Omega$ is \emph{integrable} or \emph{locally flat} if $d\Omega - \Omega \wedge \Omega = 0$.
\end{definition}
This condition is equivalent to local existence of a basis of horizontal sections of  \eqref{picard-fuchs}
near each nonsingular point $a\notin \Sigma$.

\begin{definition}
   The Picard-Fuchs system \eqref{picard-fuchs} (and the corresponding matrix 1-form $\Omega$) is called \emph{regular}
at  $a\in M$, if for any germ of a holomorpic curve $\gamma : (\C, 0) \to (M, a)$ the pull-back of the connection to $(\C,0)$ has a regular singularity at the origin:
\begin{equation}
                \forall C>0  \quad\exists p=p(C)\in \R \quad  \|X(\gamma(s))\|^{\pm 1} =O(|s|^{-p})
\end{equation}
as $s\to 0$ in the  sector $\{\arg s|\le C\}$.

Connection is called regular on $M$ if it is regular at each point $a\in M$.
\end{definition}
Regular connections remain regular after pull-backs, push-forwards, (semi)direct products etc., see \cite{deligne}.

\subsection{Quasiunipotent connections}
\begin{definition} For a point $a\in M$ a \emph{small loop around $a$} is a closed path $\gamma$, such that exists  a mapping $\{|z|\le 1\} \to M$ which maps $0$ to $a$,  $\{|z|=1\}$ to $\gamma$  and such that
 the image of $\{|z|\le 1\} \setminus \{0\}$ is
disjoint with $\Sigma$.
\end{definition}

Recall that an operator is called quasiunipotent if all its eigenvalues are roots of unity, i.e. belong to $\exp(2\pi i \Q)$.
\begin{definition}
The integrable form $\Omega$ is called \emph{quasiunipotent} at a point $a \in M$, if monodromy operator associated with any small loop
around $a$ is quasiunipotent. The system is (globally) quasiunipotent, if it is quasiunipotent at
every point of $\C P^n$.
\end{definition}
In general, this does not mean that \emph{every} monodromy operator associated to $\Omega$ is quasiunipotent.
\begin{theorem}
(Kashiwara theorem \cite{kashiwara:qu}). A regular integrable system that
is quasiunipotent at each point outside an algebraic subset of codimension 2, is globally quasiunipotent.
\end{theorem}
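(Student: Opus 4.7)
The plan is to reduce the statement to the case of a simple normal crossings singular locus via Hironaka's embedded resolution of singularities, and then exploit the abelianness of the local fundamental group at a normal crossings point. First I would apply Hironaka to obtain a proper birational morphism $\pi\colon \tilde M\to M$ such that $\tilde\Sigma:=\pi^{-1}(\Sigma)$ is a simple normal crossings divisor and $\pi$ restricts to an isomorphism $\tilde M\setminus\tilde\Sigma\to M\setminus\Sigma$. Integrability and regularity of a meromorphic connection are preserved under pull-back along a holomorphic map, so $\pi^*\Omega$ is a regular integrable connection on $\tilde M$, and the monodromy at a point $\tilde a\in\tilde M$ along a small loop agrees with the monodromy of $\Omega$ at $\pi(\tilde a)$ along the pushed-forward loop.

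Next I would describe the local monodromy near any $\tilde a\in\tilde\Sigma$. Choose local coordinates $z_1,\ldots,z_n$ in which $\tilde\Sigma$ is cut out by $z_1\cdots z_r=0$. The local complement deformation-retracts onto the torus $(S^1)^r$, so its fundamental group is the free abelian group $\Z^r$ generated by commuting meridian loops $\gamma_1,\ldots,\gamma_r$ around the branches. By the paper's definition, any small loop around $\tilde a$ is homotopic in the local complement to a product $\gamma_1^{n_1}\cdots\gamma_r^{n_r}$, where $n_i$ is the intersection multiplicity of the bounding disc with the $i$-th branch.

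It then suffices to show that each generator monodromy $T_i$ is quasi-unipotent, since commuting quasi-unipotent operators admit a common basis in which each $T_i$ is upper triangular with roots of unity on the diagonal, whence any product $T_1^{n_1}\cdots T_r^{n_r}$ is upper triangular with diagonal entries equal to products of roots of unity and is therefore again quasi-unipotent. If the $i$-th branch of $\tilde\Sigma$ is the strict transform of a codimension one component of $\Sigma$, then a generic meridian $\gamma_i$ pushes forward to a small loop at a smooth point of $\Sigma$, and quasi-unipotency of $T_i$ follows immediately from the hypothesis. The remaining, genuinely nontrivial case is that the $i$-th branch is an exceptional divisor of the resolution, whose generic points lie over the codimension at least two locus of $\Sigma$ excluded by the hypothesis; here I would invoke regularity through Deligne's canonical extension, whose residues along the components of the boundary divisor depend coherently on the stratification, allowing quasi-unipotency of residues at codimension one components to propagate to quasi-unipotency along exceptional divisors produced by the resolution.

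The main obstacle is precisely this exceptional-divisor step: since the product of non-commuting quasi-unipotent operators need not be quasi-unipotent, a purely topological argument on $\pi_1(M\setminus\Sigma)$ is insufficient, and one must use the analytic structure provided by the regularity assumption to transport the local datum across the blow-up. This is the heart of Kashiwara's original argument and is the step I expect to be most delicate to carry out in full detail.
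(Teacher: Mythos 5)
First, note that the paper does not prove this statement at all: it is quoted as Kashiwara's theorem with a citation to \cite{kashiwara:qu}, so there is no in-paper proof to compare against; your proposal has to be judged on its own.

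Your reduction is the standard one and the easy parts are correct: pulling back along a resolution preserves regularity and integrability, the local fundamental group of the complement of a normal crossings divisor is free abelian on the meridians, commuting quasiunipotent operators can be simultaneously upper-triangularized, so a product of them is again quasiunipotent, and the meridian of a strict transform is conjugate to a small loop around a smooth point of the corresponding codimension-one component of $\Sigma$, where the hypothesis applies. The genuine gap is exactly where you flag it, and it is worse than ``delicate'': it is circular as stated. A transverse disc to an exceptional divisor $E$ at a generic point $\tilde b$ pushes forward to a germ of a disc through a point $b$ of the codimension-$\geq 2$ locus, meeting $\Sigma$ only at $b$; its boundary is therefore precisely a ``small loop around $b$'' in the sense of the paper's definition, i.e.\ the quasiunipotence of the meridian of $E$ \emph{is} an instance of the conclusion being proved, and the hypothesis says nothing about it. Your appeal to Deligne's canonical extension does not close this loop: the residue of the canonical extension along $E$ is a new endomorphism, not expressible by any a priori formula in the residues along the strict transforms (the eigenvalues along $E$ of a blow-up are governed by the local monodromy filtration, not by the boundary residues alone), and the assertion that rationality of eigenvalues ``propagates coherently'' along the stratification is exactly the content of the theorem, restated rather than proved. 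To actually close the argument one needs a genuinely new mechanism --- e.g.\ an induction on the blow-up sequence in which each center is contained in the normal crossings locus of the already-constructed divisor (so that the new meridian is a product of \emph{commuting} old meridians), together with a reason such a resolution exists, or Kashiwara's own argument via constructible sheaves and restriction to generic curves. As written, the proposal establishes the theorem only in the special case where $\Sigma$ is already a normal crossings divisor and the bad set is its singular stratum.
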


\subsection{Degree of rational function} We define degree of a rational function to be the minimum of sums of degrees of numerator and denominator over all its
representations as a ratio of two polynomials. Degree of the form is defined in such a way that the operator $d$ has degree $0$.
  
\subsection{Notion of size} In this work, similar to \cite{BNY-Inf16}, we are studying various objects, like matrices, functions, differential forms, defined over $\Q$ - field of rational numbers. To obtain quantitative characteristics of these objects, we need to use the notion of \emph{size}, or \emph{complexity} of the objects.

\begin{definition}
The norm of a multivariate polynomial $P \in \C[z_1, \dots, z_n]$,
$
P(z) =\sum_\alpha c_\alpha z^\alpha$ (in the standard multiindex notation) is the sum
of absolute values of its coefficients, $\norm{P} = \sum_\alpha|c_\alpha|$. Clearly, this norm is
multiplicative,
$$
\norm{PQ} \leq \norm{P} \cdot \norm{Q}
$$
\end{definition}
\begin{definition}
The size $\textbf{S}(P)$ of an integer polynomial $P \in \Z[z_1, \dots, z_n]$ is
set to be equal to its norm, $\textbf{S}(P) = \norm{P}$.
\par The size of a rational fraction $R \in \Q(z_1, \dots , z_n)$ is
$$
\textbf{S}(R) = \min_{P,Q}\{\norm{P} + \norm{Q}: R = P/Q; P,Q \in \Z[z_1, \dots, z_n]\}
$$
\par The size of a (polynomial or rational) 1-form on $\P^m$ or on $\P^m\times \P^1$ defined over $\Q$, is the
sum of sizes of its coefficients in the standard affine chart $\C^m$.
\par The size of a vector or matrix rational function (resp., 1-form) defined
over $\Q$, is the sum of the sizes of its components.
\end{definition}

Note that, unlike polynomials, for rational functions we have only
\begin{equation}\label{eq:deg sum rat}
\begin{split}
\deg\left(\sum R_i\right)\le 2\sum_i\deg R_i\\
\textbf{S}\left(\sum_{i=1}^n R_i\right)\le (n+1)\prod_{i=1}^n \textbf{S}(R_i).
\end{split}
\end{equation}

\subsubsection{Counting  number of zeroes of the solution}
Let $\Omega$ be a rational $l \times l$-matrix  1-form of degree $d$ on the product $\C P^m$, and consider the restriction of the corresponding Picard-Fuchs system \eqref{picard-fuchs} to some line $\ell\cong\C P^1\subset\C P^m$. We are interested in the number of zeros of a linear combinations of entries of the fundamental matrix of \eqref{picard-fuchs}. In general, restriction of the fundamental matrix to this line produces a multivalued matrix function on $\ell\setminus\Sigma$, so to count zeros one should choose a simply connected domain in $\ell\setminus\Sigma$. One can easily see that the geometric complexity of the domain should be taken into account.
\begin{definition}
We denote by $\mathcal{N}(\ell) = \mathcal{N}(\Omega|_\ell)$ the supremum over all constant matrices $B$ and all triangles $T$ lying in $\ell\setminus\Sigma$ of the number of isolated zeroes of the function $\tr BX$ in $T$.
\end{definition}

\begin{theorem}\label{th:upper-bound} \cite[Theorems 7,8]{BNY-Inf16}
Let $\Omega$ be a rational $l \times l$-matrix  1-form of degree $d$ on the product $\C P^m\times \C P^1$. Assume that $\Omega$ is integrable, regular and quasiunipotent, and its size is $s = \textbf{S}(\Omega)$. Then
$$
    \forall \ell\cong \C P^1 \subset \C P^m\times \C P^1 ~~~~ \mathcal{N}(\ell)\leq s^{2^{C(dl^4m)^5}}
$$
for some universal constant $C$.
\end{theorem}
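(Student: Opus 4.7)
The plan is to reduce the multi-dimensional Picard--Fuchs problem to counting zeros of solutions of a scalar Fuchsian ODE on $\C P^1$, and to apply an effective Grigoriev--Khovanskii--Yakovenko style counting estimate while tracking the dependence of all constants on $s$, $d$, $l$, $m$. The restriction $\Omega|_\ell$ is a rational $l\times l$ 1-form on $\ell\cong\C P^1$ of degree at most $d$, of size polynomially bounded by $s$, regular (regularity is preserved under pullback), and quasiunipotent---the last because every small loop on $\ell$ is also a small loop in $\C P^m\times\C P^1$, so its monodromy is quasiunipotent by hypothesis on $\Omega$. The singular set $\Sigma\cap\ell$ has cardinality at most $\operatorname{poly}(d,l,m)$.

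Next I would convert the $l\times l$ system $dY = \Omega|_\ell\cdot Y$ on $\ell$ into a scalar Fuchsian ODE of order $l$ via the cyclic vector construction applied to a generic linear functional, so that any $u = \tr BX$ satisfies
\[
  a_l(z)u^{(l)} + \dots + a_0(z)u = 0,\qquad a_i\in\Q(z),
\]
with explicit polynomial bounds on $\deg a_i$ and $\mathbf{S}(a_i)$ in terms of $d$, $l$, $m$, $s$. Regularity and quasiunipotence imply that the indicial roots at each singular point of the scalar equation are rational numbers, bounded in absolute value by the \emph{slope} of the connection at that point, which is itself controlled by the degree and size of $\Omega$. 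Together this yields effective exponents $p_a$ with $|u(z)| = O(|z-a|^{-p_a})$ near every $a\in\Sigma\cap\ell$ and $p_a \le \operatorname{poly}(d,l,m,s)$.

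With these global growth bounds in hand I would count zeros on any triangle $T\subset\ell\setminus\Sigma$ via a Jensen / Bernstein-index argument: cover $T$ by a chain of disks of controlled geometric quality avoiding $\Sigma\cap\ell$, so that on each disk the maximum modulus of $u$ on a slightly larger concentric disk divided by some controlled non-zero value of $u$ bounds the number of zeros inside. The numerator is controlled by the local exponents produced above, while the denominator is kept from being too small by Gronwall-type transport along the chain through the ODE, whose coefficients have controlled complexity. Summing over the chain and taking the supremum over matrices $B$ and triangles $T$ gives the claimed uniform bound.

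The principal obstacle is the \emph{effective} execution of these classical steps. One must produce explicit size bounds in the cyclic vector reduction, where coefficients of the scalar equation can blow up dramatically; a quantitative Deligne-type bound on local exponents expressed solely in terms of the size and degree of $\Omega$; and an effective geometric control of the chain of disks in terms of the configuration of $\Sigma\cap\ell$. Each of these contributes a polynomial loss in the exponent, and iterating Rolle-type arguments $O(l)$ times to handle higher-derivative comparisons produces the compound power $(dl^4m)^5$ in the double exponential of the final bound, with $s$ entering only through the base of the inner exponent via the Jensen estimate.
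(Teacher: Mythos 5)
You should first be aware that the paper contains no proof of this statement at all: Theorem~\ref{th:upper-bound} is imported verbatim from \cite{BNY-Inf16} (Theorems 7 and 8 there), so there is nothing in this text to compare your argument against. Judged against the actual proof in the cited source, your outline is not the same route, and its central step has a genuine gap. Binyamini, Novikov and Yakovenko deliberately work at the level of the $l\times l$ system throughout; they do \emph{not} pass to a scalar Fuchsian equation. The reason is exactly the step you take for granted: the cyclic vector reduction does not come ``with explicit polynomial bounds on $\deg a_i$ and $\mathbf{S}(a_i)$.'' It introduces apparent singularities at the zeros of a Wronskian-type determinant whose number, position and local exponents are not controlled by the degree, size and quasiunipotency of $\Omega$; the indicial roots of the scalar equation at these extra points are not governed by the monodromy of the original connection, so your ``quantitative Deligne-type bound on local exponents'' does not follow from the hypotheses. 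This is precisely the known obstruction that forced the constructive solution to avoid scalarization.

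The argument in \cite{BNY-Inf16} instead restricts the connection to the line, brings the restricted system to a Fuchsian-like normal form with residue matrices of controlled norm (this is where regularity and quasiunipotency are converted into effective spectral bounds), and counts zeros of $\tr BX$ by the argument principle on the boundaries of slit/clusterized domains, with an induction on the configuration of singular points driven by Petrov-type difference operators attached to the monodromy; a further isomonodromic deformation argument handles lines that meet the singular locus badly. Your final ``chain of disks plus Jensen'' step is in the spirit of their growth-and-zeros principle, but by itself it does not handle the multivaluedness of $X$ on $\ell\setminus\Sigma$ or the accumulation of monodromy along the chain, which is what the slit-domain/argument-principle machinery is for. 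So the proposal is best read as a heuristic for why a doubly exponential bound of this shape is plausible, not as a proof: to repair it you would have to either solve the effective cyclic vector problem (open in this generality) or abandon the scalar reduction and follow the system-level argument of the source.
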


\section{Construction of Gauss-Manin connection for iterated integrals}\label{sec:construction}

\subsection{Base spaces: notations}\label{ssec:def bases}
Let $\C_{n+1}[x,y]$ denote the space of all bivariate polynomials of degree at most $n+1$. We will denote the points of its projectivisation 
$P\C_{n+1}[x,y]$ by $\lambda$. In standard coordinates $H=\sum_{0\le i+j\le n+1}\lambda_{ij}x^iy^j$. By $\tilde{\lambda}$ we denote the tuple of all elements of $\lambda$ except the last one, $\lambda_{00}$.

An important role plays the space $\widetilde{\C}_{n+1}[x,y]$ of polynomials vanishing at the origin, of dimension smaller by 1. The tuples $\tilde{\lambda}$ parameterize the points of its projectivisation $P\widetilde{\C}_{n+1}[x,y]$.

\subsection{Gauss-Manin connection for Abelian integrals}

\begin{definition}
    Let $H\in \C[x,y]$ be a polynomial of degree $n+1$.
     \emph{Petrov module} $P_H$ is a $\C[t]$-module defined as quotient space
     $$
        \textbf{P}_H = \frac{\Lambda^1}{dH\cdot\Lambda^0 + d\Lambda^0}
     $$
     of polynomial 1-forms over a space of relatively exact forms $f\cdot dH+dg$, where $f,g$ are polynomials.
\end{definition}

\begin{proposition}\label{prop:petrov-AI}\cite[Theorem 26.21]{iy:lade}
The set of all Morse-plus polynomials $H$ for which the forms $\omega_{ij}=x^{i-1}y^j\,dx$, $1\le i,j\le n$ form a basis of $P_H$ over $\C[h]$ is a Zarisky open subset in $P\C_{n+1}[x,y]$.
\end{proposition}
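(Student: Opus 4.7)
The strategy is to turn ``basis of a module'' into a concrete non-vanishing condition on the coefficients $\lambda$, and then exhibit one particular $H$ for which it is satisfied.

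First I would verify that for every Morse-plus $H$ of degree $n+1$, the Petrov module $\mathbf{P}_H$ is a free $\C[h]$-module of rank exactly $n^2$. This is a standard result (Brieskorn--Malgrange; see also Gavrilov): under Morse-plus assumption the Milnor number equals $n^2$, the fibers of $H$ are generic, and there are no monodromy obstructions at infinity, so freeness and the rank count follow. Consequently the collection $\{\omega_{ij}\}_{1\le i,j\le n}$ has exactly the right cardinality, and ``basis'' is equivalent to ``$\C[h]$-linear independence'' (equivalently, to generation).

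Next I would make the basis property into a Zariski open condition on $\lambda$. The cleanest route is through the degree filtration on $\Lambda^1$: filter $\mathbf{P}_H$ by the degree of polynomial coefficients, and observe that the associated graded is controlled by the Milnor algebra $\mathcal{M}_H=\C[x,y]/(\partial_x H, \partial_y H)$. Under the Morse-plus assumption the symbol map sends the class of $x^{i-1}y^j\,dx$ to the corresponding monomial in $\mathcal{M}_H$ (up to the standard correction terms coming from the top-degree component of $H$). Thus the $\omega_{ij}$ form a $\C[h]$-basis of $\mathbf{P}_H$ if and only if the $n^2$ monomials $\{x^{i-1}y^j\}_{1\le i,j\le n}$ form a $\C$-basis of $\mathcal{M}_H$. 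The latter is visibly a non-vanishing condition of a certain determinant in the coefficients $\lambda$ of $H$, hence a Zariski open condition on $P\C_{n+1}[x,y]$. An alternative, slightly less algebraic, formulation is via the Wronskian of periods: choose a $\C$-basis of vanishing cycles $\delta_1(h),\dots,\delta_{n^2}(h)$ for a Morse-plus $H$, and compute the determinant
\[
  W(h;\lambda) = \det\!\left( \oint_{\delta_k(h)} \omega_{ij} \right);
\]
basis property is equivalent to $W\not\equiv 0$, and $W$ depends polynomially (after clearing denominators) on $\lambda$, giving the same open condition.

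Finally, to show the open set is nonempty I would exhibit a specific $H$: take $H_0=x^{n+1}+y^{n+1}$ plus a generic Morse perturbation of strictly lower degree. For $H_0$ the Jacobian ideal equals $(x^n,y^n)$, so $\{x^a y^b\}_{0\le a,b\le n-1}$ is a monomial $\C$-basis of $\mathcal{M}_{H_0}$; reindexing gives our $\{x^{i-1}y^j\}_{1\le i,j\le n}$. By the graded argument of the previous step, $\{\omega_{ij}\}$ form a $\C[h]$-basis of $\mathbf{P}_{H_0}$, so the polynomial defining the open condition is not identically zero.

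\textbf{Main obstacle.} The delicate step is passing rigorously from the graded Milnor-algebra picture back to a statement about the $\C[h]$-module $\mathbf{P}_H$: one must know that freeness of $\mathbf{P}_H$ over $\C[h]$ combined with a basis on the associated graded actually produces a basis upstairs. This is where Morse-plus is used: it ensures that the filtration is finite-step on each graded slice and that there is no hidden $h$-torsion, so a graded Nakayama argument applies and lifts the monomial basis of $\mathcal{M}_H$ to a $\C[h]$-basis of $\mathbf{P}_H$. Carrying out this lift uniformly in $\lambda$, so that the resulting ``non-basis'' locus is cut out by a single polynomial, is the technical core of the proof.
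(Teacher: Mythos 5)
The paper does not prove this proposition at all: it is quoted verbatim from \cite{iy:lade} (Theorem~26.21), so there is no in-paper argument to compare against. Your outline reproduces the standard proof of that cited result (freeness of the Petrov module of rank $n^2$ for Morse-plus $H$, reduction of the basis property to a determinantal and hence Zariski-open condition via the associated graded, and the example $x^{n+1}+y^{n+1}$ plus a Morse perturbation for non-emptiness); the one imprecision is that the symbol-map criterion should be phrased in terms of the Milnor algebra of the leading homogeneous part of $H$ (equivalently, Gavrilov's degree-count plus period-Wronskian criterion), not of $H$ itself, which is harmless for the openness conclusion.
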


Forms $\omega_{ij}$ provide a convenient trivialization of homological Milnor bundle over $P\C_{n+1}[x,y]$. The  Gauss-Manin connection in this trivialization can be written explicitly and links the main result of \cite{BNY-Inf16} to Infinitesimal Hilbert 16th problem. Let us formulate this result.

Let $H$ be a polynomial satisfying conditions of Proposition~\ref{prop:petrov-AI}, such that the  affine curve $\Gamma_H=\{H=0\}\subset \C^2$ is smooth. Choose a point $p_0\in\Gamma_H$. $\Gamma_H$  is a Riemann surface of genus $\frac{n(n-1)}2$ with $n+1$ removed points. Therefore its fundamental group $\pi_1(\Gamma_H, p_0)$ is a free group in $N=n^2$ generators.

Choose $\delta_1, ... \delta_{N}\in F$  in such a way that their homology classes form a basis in $H_1(\Gamma_H, \Z)$.
\begin{theorem}[\cite{BNY-Inf16}]\label{th:non-deg-AI}
The matrix
\begin{equation}\label{eq:S1def}
{S}_1=\left\{\oint_{\delta_k}\omega_l\right\}_{k,l=1}^N,
\end{equation}
 where $\omega_l$ is an enumeration of the set  of basic forms $\mathcal{B}=\{\omega_{ij}=x^{i-1}y^j\,dx, i,j=1,...,n\}$, is non-degenerate.
Moreover, ${S}_1={S}_1(H)$ is the matrix of fundamental solutions of the Picard-Fuchs equation
\begin{equation}\label{eq-pfs}
d{S}_1=\Omega_1{S}_1,
\end{equation}
which is defined over $\Q$  and has the size $s=\size{\Omega}$, dimension $\ell$ and the degree $d=\deg
\Omega$  explicitly
bounded from above as
\begin{equation}\label{bounds-pfs}
    s\le 2^{\poly(n)},\quad d\le O(n^2),\quad \ell= n^2.
\end{equation}

\end{theorem}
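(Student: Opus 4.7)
The plan is to prove the three claims in turn: invertibility of $S_1$, existence of the Picard--Fuchs system, and the effective bounds.

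For invertibility, Proposition~\ref{prop:petrov-AI} gives that the forms $\omega_{ij}$, $1\le i,j\le n$, form a free basis of $\textbf{P}_H$ over $\C[h]$. A standard argument (cf.\ Gavrilov's work on Petrov modules) then shows that at a regular value $h$ their restrictions to the fiber $\Gamma_H=\{H=h\}$ form a basis of $H^1_{dR}(\Gamma_H)$. Since a generic affine curve of degree $n+1$ has first Betti number $n^2$, and $\{\delta_k\}_{k=1}^N$ is a $\Z$-basis of $H_1(\Gamma_H,\Z)$, non-degeneracy of the period pairing $H_1\times H^1_{dR}\to\C$ immediately gives invertibility of the matrix $S_1$.

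To obtain the Picard--Fuchs equation, I would compute $d_\lambda S_1$ entry-wise. Differentiating $\oint_{\delta_k(\lambda)}\omega_l$ with respect to a parameter $\lambda_{ij}$ and using the Gelfand--Leray mechanism (the cycle moves with the level set of $H_\lambda=\sum\lambda_{ij}x^iy^j$; the variation is captured by integrating a form involving the factor $x^iy^j$ and differentiating in $h$), one obtains an Abelian integral of a new polynomial form $\eta_{ij,l}$. The basis property of $\textbf{P}_H$ then expresses $[\eta_{ij,l}]\in\textbf{P}_H$ as a $\C[h]$-linear combination of the $\omega_{ab}$. Reading these relations as matrix identities on periods yields $dS_1=\Omega_1 S_1$ with $\Omega_1$ a rational matrix 1-form defined over $\Q$.

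The delicate part is the third: bounding the complexity. The reduction of $\eta_{ij,l}$ modulo $dH\cdot\Lambda^0+d\Lambda^0$ in the basis $\{\omega_{ab}\}$ is a linear-algebraic computation, equivalent to solving an $n^2\times n^2$ system whose matrix entries are polynomials in $\lambda$ of controlled degree and size. The dimension $\ell=n^2$ is tautological. The degree bound $d\le O(n^2)$ follows because the Petrov relations respect a bigraded structure (each reduction step raises total monomial degree by at most $\deg H=n+1$, and a bounded number of such steps suffice). The size bound $s\le 2^{\poly(n)}$ is the genuinely hard point: Cramer's rule applied naively to the reduction matrix expresses its determinant as a sum of up to $(n^2)!$ monomials, threatening doubly-exponential blow-up. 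Avoiding this requires exploiting the triangular (or block-triangular) structure induced by the lexicographic ordering on monomials $x^ay^b$, so that the determinant factors into a product of manageable pieces. This combinatorial-structural control is the main obstacle and lies at the heart of the effective BNY construction.
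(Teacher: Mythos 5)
This theorem is not proved in the paper at all: it is quoted verbatim from \cite{BNY-Inf16}, and the present paper only reuses it, generalizing the construction to iterated integrals in Section~\ref{sec:construction}. So your attempt can only be measured against the method the paper uses for its own higher-order analogue (Propositions~\ref{prop:petrov} and~\ref{prop:tildeOmega K}), and your first two steps do match it. Non-degeneracy via the Petrov-module basis, specialization to $H^1_{dR}$ of a regular fiber, and de Rham duality with the rank count $n^2=2\cdot\tfrac{n(n-1)}{2}+n$ is the standard argument and is fine. The derivation of $dS_1=\Omega_1S_1$ by Gelfand--Leray differentiation in $\lambda_{ij}$ followed by reduction in the Petrov module is precisely what the paper does in \eqref{eq:JacId}--\eqref{rat-coef}; one technical point you gloss over is that the Gelfand--Leray derivative $d\omega_l/d\lambda_{ij}$ is not itself reducible inside the polynomial Petrov module, so one first writes $H\,d\omega_i=\sum_j a_{ij}\,d\omega_j+\eta_i\wedge dH$ and inverts the matrix $A=(a_{ij})$ --- this is exactly where the rational (rather than polynomial) dependence on $\lambda$ enters, and it should appear explicitly in your second step.

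The genuine gap is in your third step, and it is a gap of your own making. You declare the size bound the ``genuinely hard point,'' assert that naive Cramer's rule threatens \emph{doubly}-exponential blow-up because the determinant has up to $(n^2)!$ terms, and then propose --- without carrying it out --- a triangularization via a lexicographic order on monomials. But $(n^2)!\le (n^2)^{n^2}=2^{O(n^2\log n)}$, which is singly exponential in a polynomial of $n$; combined with entries of norm $n^{O(n)}$ and degree $O(1)$ in $\lambda$, the crude estimate $\norm{\det M}\le m!\prod_i\max_j\norm{M_{ij}}$ for an $m\times m$ system with $m=O(n^2)$ already yields numerators and denominators of norm $2^{\poly(n)}$ and of degree $O(n^2)$ in $\lambda$, i.e.\ exactly \eqref{bounds-pfs}. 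This is the computation the paper itself performs (``Applying Cramer rule\dots'') in the proofs of Propositions~\ref{prop:petrov} and~\ref{prop:tildeOmega K}. So the obstacle you identify does not exist, the structural argument you invoke is neither needed nor available (the reduction matrix is not triangular in any natural monomial order), and as written your argument stops exactly where it should conclude. Replacing your last paragraph by the one-line Hadamard-type bound above closes the proof.
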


Using these estimates and Theorem~\ref{th:upper-bound}, one get the main result of \cite{BNY-Inf16}.

Our goal is to generalize this construction for iterated integrals of length $K>1$. To this end we will need more detailed results.

\begin{proposition}\label{prop:petrov}
Let $\theta$ be a polynomial one-form of degree $d$ on $\C^2_{x,y}$, and assume that it  it is defined over $\Q(\lambda)$, 
is of degree $\nu$ in $\lambda$ and of size $s$.
Denote by $\tilde{\lambda}=\lambda\setminus\{\lambda_{00}\}$ the tuple of all coefficients of $H$ except the first one. Then one can write a decomposition
\begin{equation}\label{eq:petrov decomp}
\theta=\sum_{i=1}^N \left(f_i\circ H\right) \omega_i+fdH+dg, \qquad f,g\in\Q(\tilde{\lambda})[x,y], \quad f_i\in\Q(\tilde{\lambda})[h],
\end{equation}
with $\deg_{x,y} f, \deg_{x,y} g \le d$ and $\deg_h f_i\le \frac{d}{n+1}$. Moreover, coefficients of $f,g$ and $f_i$ are of degree at most $\nu+O(d^3)$ in $\tilde{\lambda}$ and their sizes are bounded  by $sd^{O(d^3)}$.
\end{proposition}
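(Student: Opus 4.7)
My plan is to convert the required decomposition into a linear algebra problem over $\Q(\tilde\lambda)$ and solve it by Cramer's rule. Let $V_D\subset\Lambda^1(\C^2_{x,y})$ denote the space of polynomial $1$-forms of total degree $\le D$ in $x,y$. Fix the trial space
$$
W \;=\; \Q[x,y]_{\le d}\oplus \Q[x,y]_{\le d}\oplus\bigoplus_{i=1}^N \Q[h]_{\le d/(n+1)},
$$
and consider the $\lambda$-dependent evaluation map
$$
\Phi_\lambda\colon W\longrightarrow V_{d+O(n)},\qquad (f,g,f_1,\dots,f_N)\longmapsto\sum_{i=1}^N (f_i\circ H)\,\omega_i+f\,dH+dg.
$$
Both source and target have dimension $O(d^2)$ for $d\gg n$. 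A crucial observation is that the image of $\Phi_\lambda$ actually depends only on $\tilde\lambda$: the constant term $\lambda_{00}$ only shifts $H\mapsto H+\lambda_{00}$, and this shift can be absorbed by the reparameterization $f_i(h)\mapsto f_i(h-\lambda_{00})$ without leaving the source $W$.

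For solvability, Proposition~\ref{prop:petrov-AI} asserts that $[\omega_{ij}]$ form a $\C[h]$-basis of $P_H$ on a Zariski-open subset of $P\C_{n+1}[x,y]$, so the image of $\Phi_\lambda$ contains the class of any $\theta\in V_d$ generically in $\tilde\lambda$. A Koszul-type descent on the bigraded complex $dH\cdot\Lambda^0+d\Lambda^0\hookrightarrow\Lambda^1$ then shows that the specified degree bounds on $f,g,f_i$ are sufficient to realize the decomposition of every $\theta\in V_d$: each reduction of the $(x,y)$-degree against $dH$ lowers the degree by one while introducing at most one extra factor of $h$ per $(n+1)$ units of $(x,y)$-degree eliminated. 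I would then cut $\Phi_\lambda$ down to a square $m\times m$ subsystem with $m=O(d^2)$ that is generically invertible, and solve by Cramer's rule, writing every coefficient of $(f,g,f_i)$ as a ratio of minors applied linearly to the coefficients of $\theta$.

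For the degree bound, each entry of the matrix of $\Phi_\lambda$ is a polynomial in $\tilde\lambda$ of degree at most $d/(n+1)+1$ (the $H^k$ contribution dominates; $dH$ contributes linearly), so each $m\times m$ minor has $\tilde\lambda$-degree $O(m)\cdot O(d/n)=O(d^3)$; combining with $\nu$ from $\theta$ yields the asserted $\nu+O(d^3)$. The size bound, which is the main technical obstacle, comes from Hadamard's inequality applied to an integer-polynomial matrix of size $m=O(d^2)$ whose entries have degree $\le D=O(d/n)$ in $N_\lambda=O(n^2)$ variables and bounded individual norm: the resulting minor has norm at most $m!\cdot\binom{D+N_\lambda}{N_\lambda}^m\le d^{O(d^3)}$, and accounting for the input size $s$ through the free term yields $s\cdot d^{O(d^3)}$. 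The delicate step is ensuring that the chosen square subsystem has determinant not vanishing identically in $\tilde\lambda$, which I would arrange by indexing pivot rows via monomials dual to $\omega_{ij}H^k$ so that the principal minor reduces to the Petrov basis determinant, nonzero on the Zariski-open set of Proposition~\ref{prop:petrov-AI}.
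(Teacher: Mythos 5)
Your proposal is correct and follows essentially the same route as the paper: both reduce \eqref{eq:petrov decomp} to a square linear system of size $O(d^2)$ over $\Q(\tilde\lambda)$ whose matrix entries are polynomials of degree $O(d)$ in $\lambda$ (coming from the coefficients of $H^j$), and both extract the degree bound $\nu+O(d^3)$ and the size bound $sd^{O(d^3)}$ from Cramer's rule and expansion of the minors. The only difference is that you spell out the generic solvability with the stated degree bounds and the choice of a nondegenerate square subsystem, whereas the paper simply cites \cite{G98} for the existence of the decomposition and Proposition~\ref{prop:petrov-AI} for genericity.
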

\begin{proof}

It is well known that for any fixed sufficiently generic $\tilde{\lambda}$ one can write decomposition \eqref{eq:petrov decomp} with this bounds on degrees in $x,y$ and $h$ and some numerical coefficients, see e.g. \cite{G98}. To understand dependence on $\tilde{\lambda}$, consider \eqref{eq:petrov decomp} as a system of linear equations on the coefficients of $f, g$ and $f_i$. Assume that $d>n$. The number of equations (i.e. of coefficients of $\theta$) is  $(d+1)(d+2)=O(d^2)$. The number of unknowns is (of coefficients of $f_i, f$ and $g$) is $\frac{dn^2}{n+1}+O(d^2)=O(d^2)$. Coefficients of the left hand side of the equations are polynomials in $\lambda$, of degrees and sizes being $O(d)$ (coming from $H^j$) and $ n^{O(d)}$ correspondingly. By assumption, on the right hand side are  polynomials of degree at most $\nu_1$, divided by some common polynomial of degree $\nu_2$, $\nu_1+\nu_2=\nu$. Their sizes are at most $s$. Applying Cramer rule, we conclude that the coefficients of $f, g$ and $f_i$ can be chosen to by polynomials of degree $\nu_1+O(d^3)$ divided by the same common denominator, so of degree $\nu+O(d^3)$, and of sizes $sd^{O(d^3)}$. Since the denominators are the same, the same bounds hold for  the degrees and sizes of $f, g$ and $f_i$.
\end{proof}

\subsection{Chen homomorphism}
Here we prove an analogue of the first claim of Theorem~\ref{th:non-deg-AI} for iterated integrals.

Let $U$ be semigroup algebra corresponding to a semigroup freely generated by formal variables $X_1, \dots , X_N$. We denote the units of the semigroup and of $\pi_1(\Gamma_H, p_0)$ by $e$. Let also $\hat U$ be a completion of $U$ in Krull topology corresponding to the maximal ideal $\mathfrak{m}=\langle X_1, \dots , X_N\rangle$.
\begin{definition}\label{def-chen}
Define the  Chen homomorphism $\varphi:\pi_1(\Gamma_H, p_0) \to U$ as
\begin{equation}\label{eq:vaphi-def}
  \varphi(\delta) = e + \sum_{(\omega_{i_1}\cdots\omega_{i_k})}\oint_\delta \omega_{i_1}\cdots\omega_{i_k} X_{i_1} \cdots X_{i_k},
\end{equation}
where summation is over the set of all non-empty words in alphabet $\omega_l$.

Let $j^K:U\to U/\mathfrak{m}^{K+1}U$ be the natural homomorphism. Define $\varphi_K$ as the composition $\varphi_K=j^K \varphi:\pi_1(\Gamma_H, p_0)\to U/\mathfrak{m}^{K+1}U$.
\end{definition}
One can easily show that $\varphi$ (and therefore $\varphi_K$) is a group homomorphism to  the set of invertible elements of $U$ (of $U/\mathfrak{m}^{K+1}$ resp.), see \cite{harris}.

Note that the space $U/\mathfrak{m}^{K+1}U$ is finite-dimensional, and has standard basis of monomials  $\{X_{i_1}X_{i_2} \cdot\dots\cdot X_{i_k}, 0\le k\le K, 1\le i_j\le N\}$. We claim that the image of $\varphi_K$ spans $ U/\mathfrak{m}^{K+1}U$.

\begin{lemma}\label{spans}
Let $\Delta^{\leq K}$ be the set of products of length at most $K$ of the generators $\delta_j$ of $\pi_1(\Gamma_H, p_0)$.
The set  $\{j^K \varphi(\delta), \delta \in \Delta^{\leq k}\}$ is a basis of $ U/\mathfrak{m}^{K+1}U$.
\end{lemma}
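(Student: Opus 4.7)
The plan is to combine the multiplicativity of the Chen homomorphism with non-degeneracy of $S_1$ (Theorem~\ref{th:non-deg-AI}) via a filtration argument on powers of the augmentation ideal $\mathfrak m$. I would first observe that since $\pi_1(\Gamma_H,p_0)$ is free on $\delta_1,\dots,\delta_N$, the set $\Delta^{\le K}$ has exactly $\sum_{k=0}^{K}N^{k}$ elements, which coincides with $\dim_{\C} U/\mathfrak m^{K+1}U$; hence it suffices to prove that $V:=\mathrm{span}\{j^K\varphi(\delta):\delta\in\Delta^{\le K}\}$ equals all of $U/\mathfrak m^{K+1}U$.

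The workhorse is the ``binomial'' identity
\begin{equation*}
\prod_{j=1}^{k}\bigl(\varphi(\delta_{i_j})-e\bigr)\;=\;\sum_{S\subseteq\{1,\dots,k\}}(-1)^{k-|S|}\,\varphi\!\Bigl(\prod_{j\in S}\delta_{i_j}\Bigr),
\end{equation*}
valid for any $k\le K$ and any indices $i_1,\dots,i_k\in\{1,\dots,N\}$, where products inside $\varphi$ are taken in the order inherited from $\{1,\dots,k\}$. By the multiplicativity of $\varphi$ recorded after Definition~\ref{def-chen}, each summand on the right-hand side is a $\Z$-multiple of some $\varphi(\delta')$ with $\delta'\in\Delta^{\le k}\subseteq\Delta^{\le K}$, so the whole right-hand side lies in $V$. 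Using $\varphi(\delta_i)\equiv e+\sum_{l}c_{il}X_{l}\pmod{\mathfrak m^{2}}$ with $(c_{il})=S_1$, the left-hand side lies in $\mathfrak m^{k}$ and its image in $\mathfrak m^{k}/\mathfrak m^{k+1}$ is the symbol
\begin{equation*}
\sum_{l_1,\dots,l_k}c_{i_1 l_1}\cdots c_{i_k l_k}\,X_{l_1}X_{l_2}\cdots X_{l_k}.
\end{equation*}
Because $S_1$ is invertible by Theorem~\ref{th:non-deg-AI}, so is $S_1^{\otimes k}$; as $(i_1,\dots,i_k)$ ranges over $\{1,\dots,N\}^k$ the resulting $N^k$ symbols thus form a basis of $\mathfrak m^{k}/\mathfrak m^{k+1}$.

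Next I would induct downward on $k$ along the filtration $U/\mathfrak m^{K+1}U\supset\mathfrak m/\mathfrak m^{K+1}U\supset\cdots\supset\mathfrak m^{K}/\mathfrak m^{K+1}U\supset 0$. The base case $k=K$ is immediate: in the quotient, length-$K$ products have no tail and realize a basis of $\mathfrak m^{K}/\mathfrak m^{K+1}U$ directly. At the inductive step, assuming $\mathfrak m^{k+1}/\mathfrak m^{K+1}U\subseteq V$, the identity above exhibits $N^k$ elements of $V$ whose images in $\mathfrak m^{k}/\mathfrak m^{k+1}$ span this graded piece; subtracting off their tails (which lie in $\mathfrak m^{k+1}$, hence already in $V$) yields $\mathfrak m^{k}/\mathfrak m^{K+1}U\subseteq V$. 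Reaching $k=0$ gives $V=U/\mathfrak m^{K+1}U$, and the dimension count promotes spanning to a basis.

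The only mildly delicate point is the triangularity bookkeeping: confirming that the tails on the left-hand side of the workhorse identity really do sit in strictly deeper filtration levels, and that each $\prod_{j\in S}\delta_{i_j}$ on the right-hand side still lies in $\Delta^{\le K}$ (which is automatic since $|S|\le k\le K$). No additional input beyond Theorem~\ref{th:non-deg-AI} and the group-homomorphism property of $\varphi$ is needed.
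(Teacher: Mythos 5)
Your proof is correct and follows essentially the same route as the paper: both hinge on the multiplicative identity expressing $\prod_j(\varphi(\delta_{i_j})-\varphi(e))$ as a $\pm1$-combination of $\varphi(\delta)$, $\delta\in\Delta^{\le k}$, on the observation that its leading symbol in $\mathfrak m^{k}/\mathfrak m^{k+1}$ is governed by the first-order data of $\varphi$, and on the cardinality count using freeness of $\pi_1(\Gamma_H,p_0)$. The only difference is presentational: the paper normalizes by passing to a basis $\{[\delta_i]\}$ dual to $\{[\omega_i]\}$ so that the symbol is $X_{i_1}\cdots X_{i_k}$ on the nose, whereas you keep the general basis and invoke invertibility of $S_1^{\otimes k}$ together with an explicit downward induction on the $\mathfrak m$-adic filtration to absorb the tails --- a slightly more careful rendering of the same argument.
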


\begin{proof}
Claim of the proposition holds simultaneously for all base in $H^1(\Gamma_H, \Z)$, so
 we can assume that  $\{[\delta_i]\}\subset H_1(\Gamma_H, \Z)$ is a dual basis to
 $\{[\omega_i]\}\subset H^1(\Gamma_H, \Z)$.
We have $\varphi(e)=e$. This implies the statement for $K=0$.

If $K = 1$, then $\varphi(\delta_i)-\varphi(e)= X_i + \mathfrak{m}^{2}U $, so $X_i$ is in the span of $j^1 \varphi(\Delta^{\le 1})$.

For  $K>1$, we see from the previous equality that
$$
X_{i_1}X_{i_2} \cdots X_{i_k}=(\varphi(\delta_{i_1})-\varphi(e))\cdot\dots\cdot(\varphi(\delta_{i_k})-\varphi(e))+ \mathfrak{m}^{K+1}U,
$$
and, since $\varphi$ is homomorphism, the right hand side is a linear combination of elements of $\{\varphi(\delta), \delta \in \Delta^{\leq k}\}$ (mod $\mathfrak{m}^{K+1}U$).

So $\{\varphi_K(\delta), \delta \in \Delta^{\leq k}\}$ spans $ U/\mathfrak{m}^{K+1}U$, and, by cardinality reason (here we use that $\pi_1(\Gamma_H, p_0)$ is a free group), is a basis.
\end{proof}

\subsection{Construction of the horizontal section}
Abelian integrals are iterated integrals of length $1$.
The direct analogue of the matrix ${S}_1$ of \eqref{eq:S1def} for iterated integrals of length at most $K$ is the matrix
\begin{equation}\label{Sk-def}
{S}_K(H)=\left\{\oint_{\delta_{j_1}...\delta_{j_k}}\omega_{i_1}...\omega_{i_l}, \quad j_r,i_s=1,...,N\right\}_{k,l=0}^K
\end{equation}
of iterated integrals of length at most $K$ of basic forms $\omega_j\in\mathcal{B}$ over the cycles $\delta=\delta_{j_1}...\delta_{j_k}\in\Delta^{\le K}$  (we adopt convention $\int_\delta\emptyset=1$). We call these integrals \emph{the basic iterated integrals}.

 The iterated integrals depend on the choice of the base point of $\pi_1(\Gamma_H, p_0)$, so we choose $p_0$ as one of the points
of intersection of $\{H=0\}$ with the line $\sigma=\{x=0\}$ (generically, there are $n+1$ such points).
Columns of are $ {S}_K$ are  just the coordinates of  $\varphi_K(\delta), \delta\in\Delta^{\le K}$ written in the standard basis of $ U/\mathfrak{m}^{K+1}U$.

For a generic $H$ for all $\widetilde{H}$ sufficiently close to $H$ the  pairs  $\left(\Gamma_{\widetilde{H}},p_0(\widetilde{H})\right)$ are diffeomorphic to $\left(\Gamma_{H}, p_0(H)\right)$ by a diffeomorphism close to identity. This diffeomorphism is unique up to isotopy, so we can identify the fundamental groups $\pi_1\left(\Gamma_{\widetilde{H}}, p_0(H)\right)$. This means that  any path $\delta\in \pi_1(\Gamma_{H}, p_0(H))$ can be continuously extended to a family $\delta(\widetilde{H})$defined in some neighborhood of  $H$. Therefore  $ {S}_K$ can be extended holomorphically to some neighborhood of $H$, and, by analytic continuation, to a  multivalued matrix function holomorphic on some Zarisky open subset of $P\C_{n+1}[x,y]$.

Lemma~\ref{spans} claims that $ {S}_K$ non-degenerate for a generic choice of $H$.
Therefore near generic $H$ the matrix  $ {S}_K$ describes a basis of sections of  the trivial vector bundle $P\C_{n+1}[x,y]\times U/\mathfrak{m}^{K+1}$.

Our goal is to explicitly write coefficients of the connection for which the matrix $ {S}_K$ is a basis of horizontal sections.
We construct this connection locally in a neighborhood of some generic point $H\in P\C_{n+1}[x,y]$. The coefficients of the connection matrix $ {\Omega}_K=d {S}_K\, {S}_K^{-1}$ turn out to depend rationally on $H$ and the point of intersection $p\in\{H=0\}\cap\sigma$,
which is an algebraic function of $H$.

To eliminate the algebraic multivaluedness of  $ {\Omega}_K$ we lift the bundle and the connection to the corresponding algebraic cover. Namely, for any $n>0$ we define $B_n$  to be the product
\begin{equation}\label{def:Bn}
B_n=P\widetilde{\C}_{n+1}[x,y]\times\C P^1
 \end{equation}
of the space of all polynomials of degree at most $n+1$ vanishing at $(0,0)$, and of the line $\sigma=\{x=0\}$. Define mapping $\ev:B_n\to P\C_{n+1}[x,y]$ by $\ev(H,y)=H-H(y)$. Lifting $\widetilde{\Omega}_K=\ev^* {\Omega}_K$ defines a meromorphic connection on $B_n\times U/\mathfrak{m}^{k+1}$.
We prove that the resulting connections matrix is rational on $B_n$ and satisfies the conditions of Theorem~\ref{th:upper-bound}.

\subsection{Differentiation of iterated integrals}\label{sec:diff}

Our main tool in construction of the connection
is a formula of differentiating of integrals, a version of the Gelfand-Leray formula for non-closed paths. We follow closely \cite{gavrilov:iterated}.

Let $R$ be a functions holomorphic in some open set $W\subset\C^2$, and assume that its non-critical level $\{R=0\}$ is smooth and intersects transversally the line $\sigma=\{x=0\}$ at point $p_0(0)$.
Choose a path $\delta$ lying on $\{R=0\}$ and starting from $p_0(0)$  with  endpoint $p_1(0)$. For any point $p$ in a neighborhood of  $p_1(0)$ we can define a path $\delta(p)$ close to $\delta$, lying on $\{R=R(p)\}$ and joining $p$ and the point $p_0(p)$ of transversal intersection of $\{R=R(p)\}\cap\{x=0\}$.

\begin{proposition}(\cite[Lemma 2.2]{gavrilov:iterated})\label{prop:diff iter int} Let $\omega$ be a differential 1-form holomorphic in $W$. Then  for the integral $\int_{\delta(p)}\omega$, the following equation holds
\begin{equation}\label{g-l-path}
d\int_{\delta(p)}\omega = \left(\int_{\delta(p)}\frac{d\omega}{dR}\right)dR+\omega - (\sigma \circ R)^*\omega,
\end{equation}
where $\sigma: (\C,0)\to \{x=0\}$ is the parameterization of $\{x=0\}$ by values of $R$: $\sigma(t)=\{R=t\}\cap\{x=0\}$.
\end{proposition}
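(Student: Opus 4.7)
The plan is to realize $I(p) := \int_{\delta(p)} \omega$ as a fiber integral and then to apply the Stokes-type identity that commutes $d$ with integration along fibers. Fix a smooth family of parametrizations and define $\Phi : [0,1] \times W \to W$ by $\Phi(s,p) = \delta(p)(s)$, with $\Phi(0,p) = \sigma(R(p))$ and $\Phi(1,p) = p$; let $\pi : [0,1] \times W \to W$ be the projection. Then $I(p) = \pi_*(\Phi^* \omega)$, where $\pi_*$ denotes integration along the $[0,1]$-fiber (extracting the $ds$-coefficient and integrating in $s$).

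The core tool is the identity
\[
d(\pi_* \alpha) \;=\; i_1^* \alpha \;-\; i_0^* \alpha \;-\; \pi_*(d\alpha),
\]
valid for any $1$-form $\alpha$ on $[0,1] \times W$, where $i_\tau : W \hookrightarrow [0,1] \times W$ is the inclusion at $s = \tau$. This is a direct consequence of writing $\alpha = A(s,p)\, ds + B(s,p)$ in $p$-horizontal/vertical components and computing $d\alpha$ term by term. Apply it to $\alpha = \Phi^* \omega$. Since $\Phi \circ i_1 = \mathrm{id}_W$ and $\Phi \circ i_0 = \sigma \circ R$, the boundary terms are exactly $i_1^* \Phi^* \omega = \omega$ and $i_0^* \Phi^* \omega = (\sigma \circ R)^* \omega$, accounting for the last two summands in the claimed formula.

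It remains to identify $-\pi_*(\Phi^* d\omega)$ with $\left(\int_{\delta(p)} \tfrac{d\omega}{dR}\right) dR$. The decisive observation is that each $\delta(p)$ lies on the level curve $\{R = R(p)\}$, so $R \circ \Phi(s,p) = R(p)$ and therefore $\Phi^* dR = dR|_p$ has no $ds$-component. Writing the Gelfand--Leray decomposition $d\omega = dR \wedge \tfrac{d\omega}{dR}$ (valid on the noncritical locus of $R$), we get $\Phi^* d\omega = dR \wedge \Phi^*\tfrac{d\omega}{dR}$. The only contribution to $\pi_*$ comes from the $ds$-part of $\Phi^* \tfrac{d\omega}{dR}$, whose fiber integral is by definition $\int_{\delta(p)} \tfrac{d\omega}{dR}$; wedging with $dR|_p$ in front produces the expected $dR$-factor. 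The ambiguity in $\tfrac{d\omega}{dR}$ modulo multiples of $dR$ is harmless because $dR$ restricts to zero on $\delta(p)$, so the iterated-like integral is well defined.

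The main subtlety will be bookkeeping of signs — specifically, that the order $dR \wedge \tfrac{d\omega}{dR}$ (rather than the reverse) is what flips $-\pi_*(\Phi^* d\omega)$ into the plus sign appearing in the formula. A secondary point to verify is that the smooth family of parametrizations $\delta(p)(s)$ can indeed be chosen so that $\Phi$ is smooth in $p$ in a neighborhood of $p_1(0)$; this follows from the transverse intersection of $\sigma = \{x=0\}$ with $\{R = R(p_1(0))\}$ (giving a smooth $p_0(p)$) together with the standard extension of a path on a smoothly varying level curve, and is exactly the setup in which the statement is claimed.
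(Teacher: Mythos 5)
The paper does not prove this proposition at all: it is imported verbatim as Lemma~2.2 of Gavrilov's paper on iterated path integrals, so there is no in-text argument to compare yours against. Your blind proof is correct and self-contained. The homotopy/fiber-integration identity $d(\pi_*\alpha)=i_1^*\alpha-i_0^*\alpha-\pi_*(d\alpha)$ is stated with the right signs, the boundary identifications $\Phi\circ i_1=\mathrm{id}$ and $\Phi\circ i_0=\sigma\circ R$ give exactly the terms $\omega$ and $-(\sigma\circ R)^*\omega$, and the key structural point is exactly the one you isolate: because $R\circ\Phi(s,p)=R(p)$, the form $\Phi^*dR$ has no $ds$-component, so $\pi_*\bigl(\Phi^*dR\wedge\Phi^*\tfrac{d\omega}{dR}\bigr)=-\bigl(\int_{\delta(p)}\tfrac{d\omega}{dR}\bigr)\,dR$, and the two minus signs cancel to produce the plus sign in \eqref{g-l-path}. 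Your remarks on the well-definedness of $\tfrac{d\omega}{dR}$ modulo multiples of $dR$ (harmless since $dR$ restricts to zero on the level curve) and on the smooth choice of the family $\delta(p)(s)$ via transversality are the right things to check. This is essentially the standard Gelfand--Leray-for-paths argument, in the same spirit as Gavrilov's original proof, so nothing further is needed.
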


Assume now that the initial path $\delta=\delta(0)$ is closed, and the endpoint $p$ of the path $\delta(p)$ varies on $\sigma$. As in Definition~\ref{def:Melnikov},  denote the resulting nest of cycles by $\delta(t)$, where $t=R(p)$.
Let  $\omega_1, ..., \omega_n$ be   differential 1-forms holomorphic near $\delta_0$. Assume in addition that the pullbacks $(\sigma \circ R)^*\omega_i = 0$ for the transversal line $\sigma$.

\begin{proposition}
The following equation holds:
\begin{align}
\nonumber \frac{d}{dt}\oint  \omega_1 \dots \omega_n = \sum_{i=1}^n \oint \omega_1 \dots \omega_{i-1}\frac{d\omega_i}{dR} \omega_{i+1} \dots \omega_n \qquad\qquad\qquad\qquad\qquad\phantom{0}\\
\phantom{0}\qquad\qquad\qquad\qquad\qquad-\sum_{i=1}^{n-1}  \oint \omega_1 \dots \omega_{i-1}\frac{\omega_i \wedge \omega_{i+1}}{dR}\omega_{i+2} \dots \omega_n
\end{align}
\end{proposition}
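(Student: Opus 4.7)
The plan is to generalise the single-form Gelfand--Leray-type formula of the preceding Proposition to iterated integrals via a Chen-type variational computation, followed by careful integration by parts inside the nested integral, exactly in the spirit of Gavrilov's treatment.

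First, extend the family $\{\delta(t)\}$ to a smooth deformation of paths in a neighbourhood of $\delta(0)$ driven by a vector field $V$ chosen so that $\iota_V\,dR\equiv 1$ and so that $V$ is tangent to $\sigma$ at the basepoint $p_0(t)$; its time-$t$ flow carries $\delta(0)$ onto $\delta(t)$. Differentiating the nested iterated integral in $t$ and using Cartan's identity $\mathcal L_V\omega_i=\iota_V\,d\omega_i+d\,\iota_V\omega_i$ slot by slot produces
\[
\frac{d}{dt}\oint\omega_1\cdots\omega_n
=\sum_{i=1}^{n}\oint\omega_1\cdots(\iota_V d\omega_i)\cdots\omega_n
+\sum_{i=1}^{n}\oint\omega_1\cdots(d\iota_V\omega_i)\cdots\omega_n.
\]
On $\{R=t\}$, the condition $\iota_V\,dR=1$ turns $\iota_V$ into the Gelfand--Leray quotient by $dR$ for any 2-form (the same observation underlying the $n=1$ case), so the first sum is already the first sum of the claim.

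The heart of the argument is to process the second sum. In its $i$-th term, $\gamma^*d\iota_V\omega_i(s_i)\,ds_i=d\bigl[\iota_V\omega_i(\gamma(s_i))\bigr]$ is integrated in $s_i$ over $[s_{i+1},s_{i-1}]$ (with the paper's convention $s_1\ge s_2\ge\cdots\ge s_n$, setting $s_0=1$, $s_{n+1}=0$), yielding $\iota_V\omega_i(\gamma(s_{i-1}))-\iota_V\omega_i(\gamma(s_{i+1}))$. Reassembling the remaining $n-1$ factors, two interior contributions collapse onto each position $j\in\{1,\dots,n-1\}$: the $i=j$ term contributes $-(\iota_V\omega_j)\,\omega_{j+1}$ at position $j$, while the $i=j+1$ term contributes $+\omega_j\,(\iota_V\omega_{j+1})$. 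They combine via the standard identity
\[
\iota_V(\omega_j\wedge\omega_{j+1})=(\iota_V\omega_j)\,\omega_{j+1}-\omega_j\,(\iota_V\omega_{j+1}),
\]
which restricts on $\{R=t\}$ to $(\omega_j\wedge\omega_{j+1})/dR$, reproducing the second (negative) sum of the statement.

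The only uncancelled terms are the genuine endpoint contributions $\iota_V\omega_1(p_0)\cdot\oint\omega_2\cdots\omega_n$ and $-\iota_V\omega_n(p_0)\cdot\oint\omega_1\cdots\omega_{n-1}$. Since the closed cycle $\delta(t)$ has both endpoints equal to $p_0(t)\in\sigma$ and $V$ is tangent to $\sigma$ there, the hypothesis $(\sigma\circ R)^{*}\omega_i=0$---equivalent to $\sigma^{*}\omega_i=0$, hence to $\iota_V\omega_i\big|_\sigma=0$---kills both boundary terms. The main technical nuisance will be the bookkeeping in Step~3: checking that the two contributions at each interior position combine to exactly $\iota_V(\omega_j\wedge\omega_{j+1})$ (with the correct sign), and that the only surviving boundary contributions are precisely those involving $\iota_V\omega_1$ and $\iota_V\omega_n$, so that the single hypothesis on $\sigma$-pullbacks suffices.
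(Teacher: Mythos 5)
Your argument is correct, but it follows a genuinely different route from the paper's. The paper never introduces a vector field: it applies the path version of the Gelfand--Leray formula \eqref{g-l-path} to the partial primitives $\varphi_i(p)=\int_{p_0}^{p}\omega_i\dots\omega_n$, obtaining $d\varphi_{i+1}=\psi_{i+1}\,dR+\omega_{i+1}\varphi_{i+2}$, and then telescopes the resulting recursion $\int\eta_{i-1}\rho_i=\int\eta_i\rho_{i+1}+\int\eta_{i-1}\frac{d\omega_i}{dR}\theta_{i+1}-\int\eta_{i-1}\frac{\omega_i\wedge\omega_{i+1}}{dR}\theta_{i+2}$; the wedge terms come from $d\varphi_{i+1}\wedge\omega_i/dR$. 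You instead realize the deformation by the flow of a vector field $V$ with $\iota_V\,dR=1$ tangent to $\sigma$, differentiate slot by slot via Cartan's formula, and then integrate by parts the exact pieces $d\,\iota_V\omega_i$, so that your wedge terms arise from the pairwise combination $(\iota_V\omega_{j+1})\omega_j-(\iota_V\omega_j)\omega_{j+1}=-\iota_V(\omega_j\wedge\omega_{j+1})$; I checked that your signs and the identification of $\iota_V$ with division by $dR$ on the level curve are right, and that the only surviving boundary terms are the two you list, killed by $(\sigma\circ R)^*\omega_i=0$. What each approach buys: the paper's version is self-contained given \eqref{g-l-path} and requires no auxiliary choices, while yours makes transparent both the geometric origin of the two sums (Lie derivative versus commutator/wedge defect) and the precise place where the hypothesis on the transversal enters (only through the two endpoint evaluations of $\iota_V\omega_1$ and $\iota_V\omega_n$); note that the integration-by-parts bookkeeping you defer to ``Step 3'' is exactly the content of the identities the paper records separately in \S\ref{sec:int-by-parts}, so nothing essential is missing.
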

\begin{proof} Let us denote  $\eta_i = \omega_1 \dots \omega_i$ and $\theta_j = \omega_j \dots \omega_n$.
Denote
\begin{equation*}
\begin{split}
\varphi_i(p) & = \int_{p_0}^{p} \omega_i \dots \omega_n  = \int_{p_0}^{p}\theta_i\\
\varphi_{n+1} & \equiv 1
\end{split}
\end{equation*}
Also let us define $\psi_i(q) = \int_{p_0}^q \rho_i$, where
$$
\rho_i = \frac{d(\omega_i\varphi_{i+1})}{dR}
$$
Then we have
\begin{equation*}
\begin{split}
\int_{p_0}^p \eta_{i-1}\rho_i & = \int_{p_0}^p \eta_{i-1}\frac{d(\omega_i\varphi_{i+1})}{dR} = \int_{p_0}^p \eta_{i-1}\frac{d\varphi_{i+1} \wedge \omega_i + \varphi_{i+1}d\omega_i}{dR} ~~~~ 1 \leq i < n\\
\int_{p_0}^p \eta_{n-1}\rho_n & = \int_{p_0}^p \eta_{n-1} \frac{d\omega_n}{dR}
\end{split}
\end{equation*}
and, by \ref{g-l-path}
\begin{equation}\label{phi_i}
    d\varphi_{i+1} = \psi_{i+1} dR + \omega_{i+1}\varphi_{i+2}
\end{equation}
hence
$$
    \frac{d\varphi_{i+1} \wedge \omega_i}{dR} = \omega_i\psi_{i+1} - \frac{\omega_i \wedge \omega_{i+1}}{dR}\varphi_{i+2}
$$
and then
\begin{equation*}
\begin{split}
\int_{p_0}^p \eta_{i-1}\rho_i & = \int_{p_0}^p \eta_i\psi_{i+1} - \int_{p_0}^p \eta_{i-1}\frac{\omega_i \wedge \omega_{i+1}}{dR}\theta_{i+2} + \int_{p_0}^p \eta_{i-1}\frac{d\omega_i}{dR}\theta_{i+1} ~~~~ 1 \leq i < n\\
\int_{p_0}^p \eta_{n-1}\rho_n & = \int_{p_0}^p \eta_{n-1} \frac{d\omega_n}{dR}
\end{split}
\end{equation*}
Observe that
$$
\int\eta_i\psi_{i+1} = \int\eta_i\rho_{i+1}
$$
So we obtain
$$
\int_{p_0}^p \rho_1 = \sum_{i=1}^n \int_{p_0}^p \eta_{i-1}\frac{d\omega_i}{dR}\theta_{i+1} - \sum_{i=1}^{n-1}  \int_{p_0}^p \eta_{i-1}\frac{\omega_i \wedge \omega_{i+1}}{dR}\theta_{i+2}
$$
Now assume that $p = p_0$, so $\delta(t)$ are cycles. We will use Gelfand-Leray formula to obtain
$$
\frac{d}{dt}\oint \theta_1 = \frac{d}{dt}\oint \omega_1 \varphi_2 =\oint\frac{d(\omega_1 \varphi_2)}{dR} = \oint\rho_1
$$
Hence
$$
\frac{d}{dt}\oint \theta_1 = \sum_{i=1}^n \oint \eta_{i-1}\frac{d\omega_i}{dR}\theta_{i+1} - \sum_{i=1}^{n-1}  \oint \eta_{i-1}\frac{\omega_i \wedge \omega_{i+1}}{dR}\theta_{i+2}
$$
\end{proof}
\begin{corollary}\label{prop:g-l-iterated}
Let us assume that $\omega_i = x^{\beta_i}y^{\gamma_i}dx$, then
\begin{equation}\label{g-l-iterated}
\frac{d}{dt}\oint\omega_1 \dots \omega_n = \sum_{i=1}^n\oint\omega_1 \dots \omega_{i-1}\frac{d\omega_i}{dR}\omega_{i+1} \dots \omega_n
\end{equation}
\end{corollary}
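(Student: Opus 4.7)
The corollary is a direct specialization of the preceding proposition, so the plan is simply to verify that the two terms involving wedge products in the general formula vanish identically under the hypothesis $\omega_i = x^{\beta_i}y^{\gamma_i}\,dx$.

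First I would check that the standing assumption of the proposition, namely $(\sigma\circ R)^*\omega_i = 0$ on the transversal $\sigma = \{x=0\}$, is automatic. Indeed, each $\omega_i$ is a multiple of $dx$, and since $\sigma$ is parameterized by $y$ (or by $R$ via $t\mapsto\{R=t\}\cap\{x=0\}$), the pullback of $dx$ to $\sigma$ is zero. Hence all $\omega_i$ pull back trivially to $\sigma$ and the hypothesis of the proposition is satisfied.

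Next I would compute the two-form $\omega_i\wedge\omega_{i+1}$ appearing in the second sum of the proposition's formula:
\[
\omega_i\wedge\omega_{i+1} = \bigl(x^{\beta_i}y^{\gamma_i}\,dx\bigr)\wedge\bigl(x^{\beta_{i+1}}y^{\gamma_{i+1}}\,dx\bigr) = x^{\beta_i+\beta_{i+1}}y^{\gamma_i+\gamma_{i+1}}\,dx\wedge dx = 0.
\]
Consequently each quotient $\tfrac{\omega_i\wedge\omega_{i+1}}{dR}$ vanishes (read as a form whose wedge with $dR$ equals $\omega_i\wedge\omega_{i+1}=0$, which admits the zero representative), so the entire second sum in the proposition disappears.

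Substituting these observations into the conclusion of the preceding proposition, only the first sum survives, giving exactly
\[
\frac{d}{dt}\oint\omega_1\cdots\omega_n = \sum_{i=1}^n\oint\omega_1\cdots\omega_{i-1}\frac{d\omega_i}{dR}\omega_{i+1}\cdots\omega_n,
\]
which is the assertion of the corollary. There is no real obstacle here; the only subtlety worth flagging is the interpretation of the symbol $\tfrac{\omega_i\wedge\omega_{i+1}}{dR}$ as a one-form whose wedge with $dR$ recovers the two-form in the numerator, so that vanishing of the numerator legitimately yields the zero one-form.
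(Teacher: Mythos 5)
Your proposal is correct and follows exactly the paper's own (one-line) argument: the wedge products $\omega_i\wedge\omega_{i+1}$ vanish because each $\omega_i$ is a multiple of $dx$, and the pullbacks $(\sigma\circ R)^*\omega_i$ vanish for the same reason, so only the first sum of the preceding proposition survives. You merely spell out the two observations the paper states without elaboration.
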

\begin{proof}
    True since $(\tau\circ R)^*\omega_i = 0$ and $\omega_i \wedge \omega_j = 0$.
\end{proof}

\subsection{Exact forms in iterated integrals}\label{sec:int-by-parts}
Let $\omega_1, \dots, \omega_n$ be a holomorphic differential 1-forms, $g$ be a holomorphic function in a domain $V$ and $\delta\subset V$ be a path connecting points $p_0$ and $p_1$. Our goal is to express iterated integrals involving the exact form $dg$ in terms of iterated integrals of smaller length.

Clearly
$$
\int_{p_0}^{p_1}dg = g(p_1) - g(p_0)
$$
For iterated integrals of length greater than $1$, integrating by parts and using \eqref{phi_i} gives
$$
\int_{p_0}^{p_1}(dg)\omega_1 \dots \omega_n = \int_{p_0}^{p_1}(dg)\varphi_1 = g\varphi_1\bigg\vert_{p_0}^{p_1} -\int_{p_0}^{p_1}g(\psi_1dR+\omega_1\varphi_2).
$$
But $dR = 0$ on level curve, so we have
$$
\int_{p_0}^{p_1}(dg)\omega_1 \dots \omega_n = g(p_1)\int_{p_0}^{p_1} \omega_1 \dots \omega_n - \int_{p_0}^{p_1}(g\omega_1)\omega_2 \dots \omega_n
$$
Next,
$$
\int_{p_0}^{p_1}\eta_i (dg) \theta_{i+1} = \int_{p_0}^{p_1} \eta_i \left(g(q)\int_{p_0}^q\theta_{i+1} - \int_{p_0}^q (g\omega_{i+1})\theta_{i+2} \right)
$$
Hence
\begin{equation}\label{eq:exform}
\int_{p_0}^{p_1}\omega_1 \dots \omega_i (dg) \omega_{i+1} \dots \omega_n = \int_{p_0}^{p_1}\omega_1 \dots (\omega_i g) \omega_{i+1} \dots \omega_n - \int_{p_0}^{p_1}\omega_1 \dots \omega_i (g \omega_{i+1}) \dots \omega_n
\end{equation}
And the third formula:
$$
\int_{p_0}^{p_1}\omega_1 \dots \omega_n(dg) = \int_{p_0}^{p_1}\omega_1 \dots \omega_n(g(q) - g(p_0)) = \int_{p_0}^{p_1}\omega_1 \dots (\omega_ng) - g(p_0)\int_{p_0}^{p_1}\omega_1 \dots \omega_n
$$

\subsection{Construction of Picard-Fuchs system}
Let $H$ be a Hamiltonian of degree $n+1$, which we can write in multi-index form
$$
    H = \sum_{0\le i+j\leq n+1} \lambda_{ij} x^iy^j, \quad \lambda=(\lambda_{ij})\in P\C_{n+1}[x,y].
$$
We assume that $\mathcal{B} =\{x^{i-1}y^jdx\}= \{\omega_l, l=1,...,n^2\}$ form  a basis of the Petrov module $P_H$, and that the curve $\{H=0\}$ is smooth and intersects the line $\sigma=\{x=0\}$ transversally. We will compute the connection matrix $\Omega_K$ locally near $H$, and then, by analytic continuation, this expression will be valid everywhere.

Let $\delta \subset \{H=0\}$ be a cycle with an initial point at $p(H)\in\delta\cap\sigma$, and consider the vector of coefficients of $\varphi(\delta)$ of \eqref{eq:vaphi-def} in the standard basis $\{X_{i_1}\cdot\dots\cdot X_{i_k}, k=1,...,K\}$ of $U/\mathfrak{m}^{K+1}$:
$$
I(\lambda)=
\begin{pmatrix}
1\\I_1\\I_2 \\...\\I_{N_K}
\end{pmatrix}
$$
where $\lambda = \{\lambda_\alpha\}_{|\alpha|\leq n+1}$. We assume that the integrals $I_1, \dots, I_{N_K}$ are ordered by length, i.e. $I_j = \int \eta_1 \dots \eta_k$ if and only if $N_{k-1}<j\leq N_k$, where $N_k=\dim U/\mathfrak{m}^{k+1}$.

Our first goal is to provide an analogue of Proposition~\ref{prop:petrov} for iterated integrals.

\begin{proposition}\label{lin-eq-1}
 If $\eta_1\in\mathcal{B}, \dots , \eta_K\in\mathcal{B}$, and $\theta$ is a differential 1-form of degree $d$, then, for any $1\leq i \leq K+1$,
\begin{equation}\label{bigsum}
    \oint_\delta \eta_1 \dots \eta_{i-1}\theta\eta_i \dots \eta_K = \sum_{j=1}^{N_{K+1}}h_j I_j, \qquad h_j\in\C(\lambda)[p],
\end{equation}
where $p=\delta(0)$ is the initial point of the cycle $\delta$. Degrees of $h_j$ in $p$ do not exceed $d+O(nK)$.

Moreover, if $\theta$ is defined over $\Q(\lambda)$, its degree and size do not exceed $\nu$ and $s$ correspondingly and $d\ge n$, then the
the degrees and  sizes of $h_j$ do not exceed  $\nu+O(K^5d^4)$ and $s(Kd)^{O(K^5d^3)}$ respectively.
\end{proposition}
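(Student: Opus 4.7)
The plan is to proceed by induction on $K$, combining Proposition~\ref{prop:petrov} (applied to $\theta$, and at deeper layers to auxiliary polynomial forms that arise) with the integration-by-parts identities of Section~\ref{sec:int-by-parts}. The base case $K=0$ will handle $\oint_\delta\theta$ directly: I will apply Proposition~\ref{prop:petrov} to write $\theta=\sum_l(f_l\circ H)\omega_l+f\,dH+dg$, observe that on the cycle $\delta\subset\{H=0\}$ the pullback of $dH$ vanishes, $f_l\circ H$ restricts to the constant $f_l(0)\in\Q(\tilde\lambda)$, and $\int_\delta dg=0$ since $\delta$ is closed; thus $\oint_\delta\theta=\sum_l f_l(0)I_l$ is already of the desired form.

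For the inductive step from $K-1$ to $K$, I will decompose $\theta$ as above and substitute into $\oint_\delta\eta_1\dots\eta_{i-1}\theta\eta_i\dots\eta_K$. The $f\,dH$ piece vanishes by pullback, and each $(f_l\circ H)\omega_l$ piece contributes $f_l(0)$ times a basic length-$(K+1)$ iterated integral, already in the required form. The remaining $dg$ piece will be processed by one of the identities of Section~\ref{sec:int-by-parts}: for an interior $dg$ (with $1<i<K+1$) identity \eqref{eq:exform} produces two length-$K$ iterated integrals in which a single form has been replaced by the polynomial 1-form $\eta_{i-1}g$ or $g\eta_i$; for $dg$ at an endpoint the analogous identity yields one such length-$K$ integral plus a boundary term $\pm g(p)$ times a basic length-$K$ iterated integral, using $p_0=p_1=p$ on the cycle. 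In each case the new length-$K$ integrals have $K-1$ basic forms and one non-basic polynomial 1-form of $(x,y)$-degree at most $d+O(n)$, so the inductive hypothesis closes the argument.

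The hard part will be the quantitative bookkeeping. I expect that at the $k$-th recursive layer the non-basic form has $(x,y)$-degree at most $d+O(kn)$, since each previous layer multiplied a basic form (of degree $O(n)$) into the current form, so applying Proposition~\ref{prop:petrov} at that layer contributes an additive $O((d+kn)^3)$ to the $\tilde\lambda$-degree and a multiplicative size factor of the form $(d+kn)^{O((d+kn)^3)}$. Compounding these contributions over the $K$ layers, together with at most $3$-fold branching per integration by parts and the rational-sum estimates \eqref{eq:deg sum rat}, should yield the stated $\deg_{\tilde\lambda}h_j\le\nu+O(K^5d^4)$ and $\size{h_j}\le s(Kd)^{O(K^5d^3)}$.

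One subtle point I want to emphasize in the bookkeeping is that the $p$-degrees do \emph{not} compound multiplicatively across layers: each boundary factor $g_k(p)$ multiplies a basic iterated integral that \emph{terminates} the recursion at that layer rather than being fed back into another decomposition. Since any individual $g_k$ has $p$-degree at most $d+O(nK)$ (as $p\in\sigma=\{x=0\}$ and $\deg g_k\le d+O(kn)$), after collecting all terms the final $h_j$ are polynomials in $p$ of degree at most $d+O(nK)$, which is the claimed bound.
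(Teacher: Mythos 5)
Your qualitative argument is the paper's: decompose $\theta$ via Proposition~\ref{prop:petrov}, kill the $f\,dH$ term on $\{H=0\}$, turn the $(f_l\circ H)\omega_l$ terms into $f_l(0)$ times basic integrals, and push the $dg$ term down one level of length via \S\ref{sec:int-by-parts}, with boundary terms $g(p)$ appearing only at terminal leaves. Your observation that the $p$-degrees do not compound (each term carries at most one boundary factor $g^{(k)}(p)$, of degree at most $d+O(Kn)$) is exactly how the paper gets $\deg_p h_j\le d+2nK$.

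The gap is in the $\tilde\lambda$-degree and size bookkeeping. Each coefficient $h_j$ is ultimately a \emph{sum} of up to $2^{O(K)}$ rational functions of $\tilde\lambda$ (one per leaf of the recursion tree), each individually of degree $\nu+O(K^4d^3)$. If you combine them with the rational-function estimates \eqref{eq:deg sum rat}, as you propose, you get $\deg h_j\le 2\sum_i\deg R_i=2^{O(K)}\left(\nu+O(K^4d^3)\right)$ and a size bound of the form $\prod_i\size{R_i}$, i.e.\ roughly $s^{2^{O(K)}}(Kd)^{2^{O(K)}K^4d^3}$ --- both exponentially worse in $K$ than the claimed $\nu+O(K^5d^4)$ and $s(Kd)^{O(K^5d^3)}$. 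The missing idea is that all the rational coefficients $f_l,g$ produced by Cramer's rule at \emph{every} inductive step share an explicit common denominator $\Delta=\left(\prod_{j=1}^{d+2nK}\Delta_j\right)^K$, where $\Delta_j$ is the determinant of the linear system \eqref{eq:petrov decomp} for forms of degree $j$; one has $\deg\Delta=O(K^5d^4)$ and $\size{\Delta}=(Kn)^{O(K^5n^3)}$. Replacing $\theta$ by $\Delta\theta$ makes all coefficients polynomial in $\tilde\lambda$, and for polynomials the degree of a sum is the maximum (not twice the total) and the size of a sum is the sum of sizes (a factor $2$ per branching, not a product), which is what collapses the recursion to $b(K+1,d,\nu)\le\max\{\nu+O(d^3),\,b(K,d+2n,\nu+O(d^3))\}=\nu+O(K^4d^3)$ and $\mathfrak{s}(K+1,d,s)\le s(Kd)^{O(K^4d^3)}$. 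Note also that the dominant term $O(K^5d^4)$ in the final degree bound is precisely $\deg\Delta$, added back at the end --- your per-layer additive contributions only account for $O(K^4d^3)$, so without the common-denominator step you cannot even locate the source of the stated exponent.
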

\begin{proof}
Using Proposition~\ref{prop:petrov}, we can write
\begin{equation}\label{subst-to-long}
\begin{split}
    \oint\eta_1 \dots \eta_{i-1}\theta\eta_i \dots \eta_K = \oint\eta_1 \dots \eta_{i-1}\left(\sum_{j=1}^m (f_j\circ H)\omega_j+dg\right)\eta_i \dots \eta_K\\
    = \sum_{j=1}^m f_j(0)\oint\eta_1 \dots \eta_{i-1}\omega_j\eta_i \dots \eta_K+\oint\eta_1 \dots \eta_{i-1}(dg)\eta_i \dots \eta_K,
\end{split}
\end{equation}
since $\delta\subset\{H=0\}$. The latter term can be rewritten as a sum of two iterated integrals of length $K$, using the equations of  \S\ref{sec:int-by-parts}, or, if $i=1$ or $i=K+1$, as a sum of an iterated integral of length $K$ and $g(p)\oint_\delta \eta_1 \dots  \eta_K$, a multiple of a basic integral. This proves (by induction) the formula \eqref{bigsum}.

Bounds on the size and degree follow from this inductive proof and bounds of Proposition~\ref{prop:petrov}.
Note that iterated integrals obtained on different steps  of the induction have different lengths.
First, one can see that $\deg_{p}h_j\le  d + \sum \deg\eta_i\le d+2nK$, as dependence on $p$ appears only in the remainder $g(p)$ above and their degrees are bounded by $d+2nK$.

One can show that all  denominators of the polynomials $f_i, g$ obtained on al inductive steps are factors of
$\Delta=\left(\prod_{j=1}^{d+2nK} \Delta_j\right)^K$, where $\Delta_j$ is the determinant of the system \eqref{eq:petrov decomp} corresponding to a form $\theta$ od degree $j$. We get $\deg\Delta=O(K^5d^4)$ by Proposition~\ref{prop:petrov}.
Therefore, multiplying $\theta$ by $\Delta$, we can assume that all $f_i, g$ on all steps of induction are polynomial in $\lambda$.
Denote by $b(K+1, d, \nu)$ the degrees of their coefficients. Then
$$
b(K+1, d, \nu)\le \max\{\nu+O(d^3),b(K, d+2n, \nu+O(d^3))\}
$$
(polynomiality allows to replace sums of degrees in \eqref{eq:deg sum rat} by maximum of the degrees). This implies $b(K+1, d, \nu)\le \nu+O(K^4d^3)$. Adding the degree of $\Delta$, we get the required estimate.

Similarly, assume that the polynomials $f_i, g$ obtained on all inductive steps are polynomials, and denote by $\mathfrak{s}(K+1, d, s)$ the size of the polynomials $h_j$.
We have
$$
\mathfrak{s}(K+1, d, s)\le \max\left\{sd^{O(d^3)},2\mathfrak{s}(K, d+2n, sd^{O(d^3)})\right\},
$$
where factor $2$ appears due to \eqref{eq:exform}.
This implies that
$$
\mathfrak{s}(K+1, d, s)\le s(Kd)^{O(K^4d^3)}.
$$
Now, size of $\Delta$ is $(Kn)^{O(K^5n^3)}$, so applying the previous estimate to $\Delta\theta$ (and remembering $d>n$), we get the result.
\end{proof}

We can prove more general statement:
\begin{proposition}\label{prop:iter Petrov}
Let $\theta_1, \dots, \theta_K\in \Lambda^1(\C^2)\otimes\C(\lambda)$ be  1-forms of degree at most $d$, and of degree in $\lambda$ at most $\nu$.
Then
\begin{equation}\label{eq:iter Petrov}
    \oint_\delta \theta_1 \dots \theta_K = \sum_{j=1}^{N_K}h_j I_j,\qquad h_j\in\C(\lambda)[p],
\end{equation}
with  degrees of $h_j$ in $\lambda,p$  bounded by  $ \nu d^{O(K^2)}$.

\end{proposition}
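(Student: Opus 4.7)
The approach would be a double induction: outer on the length $K$, inner on the number of not-yet-basic forms in the integrand. The outer base $K=1$ is Proposition~\ref{prop:petrov} applied directly: decomposing $\theta_1 = \sum_\ell f_\ell(H)\omega_\ell + f\,dH + dg$ and restricting to $\delta \subset \{H=0\}$ kills the $f\,dH$ term (since $\gamma^* dH = 0$) and the $dg$ term (since $\oint_\delta dg = 0$ on a closed cycle), leaving $\oint_\delta \theta_1 = \sum_\ell f_\ell(0)\oint_\delta \omega_\ell$.

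For the outer inductive step I would introduce the intermediate integrals
\[
J_i := \oint_\delta \theta_1\cdots\theta_i\,\omega_{j_{i+1}}\cdots\omega_{j_K}, \quad 0\le i\le K,
\]
where the trailing $K-i$ forms are already basic. The target is $J_K$, and $J_0$ is a basic iterated integral $I_j$. To pass from $J_i$ to $J_{i-1}$, I would apply Proposition~\ref{prop:petrov} to $\theta_i$ and substitute: the $f\,dH$ piece vanishes on $\delta$, the basic-form piece produces $\sum_\ell f_\ell(0)$ times a $J_{i-1}$-type integral, and the exact-form piece $\oint \theta_1\cdots\theta_{i-1}(dg)\omega_{j_{i+1}}\cdots\omega_{j_K}$ is unwound by the identities of Section~\ref{sec:int-by-parts}: absorbing $g$ into an adjacent form (with an extra boundary factor $g(p)$ when the exact form sits at the very start or end of the sequence) produces two or three iterated integrals of length $K-1$, each handled by the outer induction hypothesis. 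Chaining $i = K, K-1, \ldots, 0$ then expresses $\oint_\delta \theta_1\cdots\theta_K$ in the claimed form $\sum_j h_j I_j$.

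For the degree bookkeeping, each inner reduction step inflates the $\lambda$-degree by $O(d^3)$ (from Proposition~\ref{prop:petrov}) and may roughly double the $(x,y)$-degree $d$ of a neighbouring form through $g$-absorption. The inner loop runs $O(K)$ times per outer level and the outer induction has $K$ levels, so the $\lambda$- and $p$-degrees of the coefficients $h_j$ grow by a factor of at most $d^{O(K)}$ per outer level, yielding the composite bound $\nu \cdot d^{O(K^2)}$. The main obstacle is precisely this accounting: since each $g$-absorption raises the form degree, subsequent Petrov decompositions act on forms of growing effective degree, and one has to verify that the cascade remains polynomial in $d$ with exponent $O(K^2)$ rather than turning genuinely exponential in $K$. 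Using Proposition~\ref{lin-eq-1} as a self-contained subroutine, which already packages one-arbitrary-form reductions together with an explicit degree tally, would streamline this bookkeeping considerably: one could then pass from an $m$-arbitrary-form integral to an $(m{-}1)$-arbitrary-form integral in a single step whose degree cost is read off directly from that proposition, and iterate $m = K$ times down to $m=0$.
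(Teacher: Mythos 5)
Your proposal is correct and follows essentially the same route as the paper: decompose the $\theta_i$ via Proposition~\ref{prop:petrov}, use the restriction to $\{H=0\}$ to discard the $f\,dH$ pieces, unwind the exact-form contributions via the identities of \S\ref{sec:int-by-parts} into iterated integrals of length $K-1$, and close the argument by induction on the length with a degree recursion of the same shape, yielding $\nu d^{O(K^2)}$. The only organizational difference is that the paper substitutes all $K$ Petrov decompositions at once and expands multilinearly over maps $\phi:\{1,\dots,K\}\to\mathcal{B}$, whereas you process the forms sequentially through the intermediate integrals $J_i$; this is cosmetic.
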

\begin{proof}
Indeed, decomposing $\theta_i=\sum_{\omega\in\mathcal{B}} f_{i\omega}\circ H\omega + f_idH+dg_i$ as in Proposition~\ref{prop:petrov}, we see that
\begin{equation}
\oint_\delta\theta_1\dots\theta_K=\sum_{\phi}  \left(\prod_{i=1}^Kf_{i\phi(i)}\right)\oint_\delta{\phi(1)}...{\phi(K)}+...,
\end{equation}
where summation is over all mappings $\phi:\{1,...,K\}\to\mathcal{B}$ and the dots denote $(n^2+1)^K-n^{2K}$ iterated integrals of length $K$ with at least one exact form $dg_i$. These can be represented as iterated integrals of lesser length, and the result follow by induction.

To estimate the degrees and sizes of the coefficients in \eqref{bigsum}, note that the degrees in $\lambda$ of $ \prod_{i=1}^Kf_{i\phi(i)}$ are $K\nu +{O(Kd^3)}$ by Proposition~\ref{prop:petrov}. The remaining $(n^2+1)^K-n^{2K}$ terms can be rewritten by formulae of \S\ref{sec:int-by-parts}, as  at most $2\left((n^2+1)^K-n^{2K}\right)$ iterated integrals of length $K-1$ with coefficients being rational in $\lambda$ and polynomial in $p$, of degree at most $K\nu + {O(Kd^3)}$ in $\lambda$ and at most $d$ in $p$. Under the integral sign stand some tuple of basic forms, $dg_i$-s  and product of some $g_i$ with one of them. So  these are forms of degrees at most $3d$. Since the degree of the sum of rational functions is at most double their total sum, we get
$$
b(K,d,\nu)\le  4\left(K\nu +{O(Kd^3)}+b\left(K-1,3d,2\nu+O(d^3)\right)\right)\left((n^2+1)^K-n^{2K}\right),
$$
where $b(K,d,\nu)$ denote an upper bound for the degree. This implies that $b(K,d,\nu)\le \nu d^{O(K^2)}$.
\end{proof}

However, in order to prove Theorem~\ref{th:upper-bound}, we will need only dependence on $\lambda_{00}=-t$ and on $p$, which are much better:
\begin{proposition}\label{prop:iter Pertov in p}
Assume that the forms $\theta_i$ are independent on $\lambda_{00}$. Then the  coefficients  $h_j$ in \eqref{eq:iter Petrov} are polynomial in $p, \lambda_{00}$, and for their degrees in $p, \lambda_{00}$ we get
\begin{equation}\label{eq:iter Petrov in p}
\deg H\deg_{\lambda_{00}}h_j+\deg_ph_j+\sum\deg\omega_{j_k}=\sum\deg\theta_i.
\end{equation}
\end{proposition}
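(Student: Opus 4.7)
The plan is to prove the statement by induction on $K$, refining the proof of Proposition~\ref{prop:iter Petrov} so as to track a quasi-homogeneous weight rather than total degree. Assign weights $w(x)=w(y)=w(p)=1$ and $w(\lambda_{00})=\deg H=n+1$, so that a form of degree $d$ has weight $d$ and a monomial $p^a\lambda_{00}^b$ has weight $a+(n+1)b$. The equation~\eqref{eq:iter Petrov in p} then asserts that each non-vanishing monomial of $h_j$, combined with the basic iterated integral $I_j$ of weight $\sum_k\deg\omega_{j_k}$, contributes total weight $\sum_i\deg\theta_i$; equivalently, $h_j$ is weight-homogeneous of weight $\sum\deg\theta_i-\sum\deg\omega_{j_k}$.

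The first step is a weight-preserving refinement of Proposition~\ref{prop:petrov}. Writing $\tilde H=H-\lambda_{00}$ and using that $\theta$ is $\lambda_{00}$-independent, I would decompose
\[
\theta=\sum_i(f_i\circ\tilde H)\,\omega_i+f\,d\tilde H+dg, \qquad f_i\in\Q(\tilde\lambda)[\tilde h],\ f,g\in\Q(\tilde\lambda)[x,y],
\]
choosing $f_i,f,g$ so that every monomial of the right-hand side has the same weight as the corresponding homogeneous component of $\theta$. This is possible one weight at a time, since the top-degree part of $\tilde H$ has weight $n+1$ and controls the leading behaviour of each Petrov decomposition, with lower-weight corrections handled recursively. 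Restricting to the cycle $\{H=0\}=\{\tilde H=-\lambda_{00}\}$ yields $(f_i\circ\tilde H)|_\delta=f_i(-\lambda_{00})$, $d\tilde H|_\delta=0$, and $\oint_\delta dg=0$; this proves the base case $K=1$ with $h_i=f_i(-\lambda_{00})$ weight-homogeneous of weight $\deg\theta-\deg\omega_i$.

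For the inductive step, I would substitute the refined decomposition of one factor $\theta_j$ into $\oint_\delta\theta_1\cdots\theta_K$ and treat the three resulting term types separately. The basic-form terms $(f_{ji}\circ\tilde H)\omega_i$ are constant along $\delta$, so $f_{ji}(-\lambda_{00})$ pulls out of the iterated integral, leaving an iterated integral in which $\theta_j$ has been replaced by the basic form $\omega_i$; each monomial of $f_{ji}(-\lambda_{00})$ has weight $\deg\theta_j-\deg\omega_i$, so the total weight is preserved and the inductive hypothesis applies. The term $f_j\,d\tilde H$ pulls back to zero on $\delta$. The exact term $dg_j$ is reduced via the three identities of Section~\ref{sec:int-by-parts}, producing iterated integrals of length $K-1$ either with a compound form $g_j\omega$ of degree $\deg g_j+\deg\omega=\deg\theta_j+\deg\omega$, or with a scalar boundary factor $g_j(p)$ of $p$-degree at most $\deg g_j=\deg\theta_j$; in either case the weight of the resulting $h_jI_j$ contribution remains $\sum\deg\theta_i$.

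The main obstacle is Step~1: verifying that the linear system underlying~\eqref{eq:petrov decomp} decouples along the weight grading, so that a weight-homogeneous right-hand side admits a weight-homogeneous solution in $f_i,f,g$. Once this is established, the rest of the proof is a careful weight-tracking through the identities of Section~\ref{sec:int-by-parts}, giving~\eqref{eq:iter Petrov in p} as claimed.
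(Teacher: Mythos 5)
Your approach is essentially the paper's own: its proof is a one-line homogeneity count (``the bound can be computed by asymptotics at infinity, as $t\to\infty$ \dots counting homogeneity degrees''), which is precisely your weight grading $w(x)=w(y)=w(p)=1$, $w(\lambda_{00})=\deg H$, just written out as an induction through the Petrov decomposition and the integration-by-parts identities. One caveat: your claim that $h_j$ is exactly weight-homogeneous (hence the equality in \eqref{eq:iter Petrov in p}) holds only for homogeneous forms and generic homogeneous Hamiltonians; in general the lower-weight corrections you yourself invoke survive, so the correct conclusion is the inequality $\deg H\deg_{\lambda_{00}}h_j+\deg_ph_j+\sum\deg\omega_{j_k}\le\sum\deg\theta_i$ --- which is exactly what the paper notes (``In general case, this becomes an inequality'') and all that the application in Lemma~\ref{lem:Mk as lincomb} requires.
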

\begin{proof}
Polynomiality follows from Proposition~\ref{prop:petrov}. Therefore the bound can be computed by asymptotics at infinity, as $t\to\infty$.
For homogeneous forms and generic homogeneous Hamiltonians counting homogeneity degrees we get
$$
\deg H\deg_{\lambda_{00}}h_j+\deg_ph_j+\sum\deg\omega_{j_k}=\sum\deg\theta_i,
$$
where $I_j=\int\omega_{j_1}...\omega_{j_m}$. In general case, this becomes an inequality.
\end{proof}

\begin{proposition}\label{prop:tildeOmega K}
Let, as before, $H$ be a Morse-plus polynomial such that the curve $\Gamma_H=\{H=0\}$ is smooth and intersects transversally the line $\sigma=\{x=0\}$. Let $ {S}_K(\lambda)$ be the  $N_K \times N_K$-matrix valued function defined in a neighborhood of $H$ as in \eqref{Sk-def}.
Then
\begin{equation}\label{g-l-long-matrix}
d {S}_K(\lambda) =  {\Omega}_K(\lambda,p) {S}_K(\lambda),
\end{equation}
with  $N_K \times N_K$ matrix of $ {\Omega}_K(\lambda,p(\lambda))$ of one-forms on $P\C_{n+1}[x,y]$ with coefficients being rational functions of $\lambda$ and polynomial in $p(\lambda)$. Here $p=p_0(\lambda)$ is a starting point of integration, $p\in\Gamma_H\cap\sigma$.

The coefficients of $ {\Omega}_K(\lambda,p)$ have degree in $p$ at most $O(nK)$, and their degrees in $\lambda$ and sizes are at most $O(K^6n^7)$ and $(Kn)^{O(K^6n^5)}$.
\end{proposition}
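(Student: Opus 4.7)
The plan is to construct the connection matrix $\Omega_K$ column-by-column by differentiating each basic iterated integral on the left-hand side of \eqref{g-l-long-matrix} with respect to $\lambda$, and then rewriting the result as a $\C(\lambda)[p]$-linear combination of basic iterated integrals using Proposition~\ref{lin-eq-1}. Concretely, for each basis vector $I_j=\oint_\delta\omega_{i_1}\cdots\omega_{i_k}$ with $k\le K$, I would compute the 1-form $dI_j$ on $P\C_{n+1}[x,y]$, read off its coefficients in front of the basis $\{I_l\}$, and thereby identify the $j$-th column of $\Omega_K$.

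The computation of $dI_j$ rests on an adaptation of the Gelfand-Leray principle of \S\ref{sec:diff} to parameter variation. As $\lambda$ varies, the level curve $\{H_\lambda=0\}$ and its intersection point $p_0(\lambda)=\{H_\lambda=0\}\cap\sigma$ move, so the family $\delta(\lambda)\subset\{H_\lambda=0\}$ must be deformed accordingly. The standard parameter Picard-Fuchs argument for Abelian integrals, iterated via the chain rule for iterated integrals, yields
\begin{equation}
\frac{\partial I_j}{\partial\lambda_\alpha}=\sum_{m=1}^k\oint_{\delta(\lambda)}\omega_{i_1}\cdots\omega_{i_{m-1}}\,\theta_\alpha^{(m)}\,\omega_{i_{m+1}}\cdots\omega_{i_k}+B_{j,\alpha}(p),
\end{equation}
where $\theta_\alpha^{(m)}$ is a polynomial 1-form of bivariate degree $O(n)$, $\lambda$-degree $O(1)$ and size $O(1)$, built from $\partial H/\partial\lambda_\alpha$ (a monomial of degree $\le n+1$) and the Gelfand-Leray quotient of $d\omega_{i_m}$ by $dH$, and where $B_{j,\alpha}(p)$ is a sum of iterated integrals of strictly smaller length multiplied by polynomial-in-$p$ factors coming from the motion of the base point $p_0(\lambda)$ determined implicitly by $H_\lambda(0,p_0)=0$.

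To each iterated integral on the right-hand side I would apply Proposition~\ref{lin-eq-1} with $\theta=\theta_\alpha^{(m)}$, expressing it as $\sum_l h_{j,\alpha,l}\,I_l$ with $h_{j,\alpha,l}\in\C(\lambda)[p]$ of $p$-degree at most $O(n)+O(nK)=O(nK)$, $\lambda$-degree at most $O(K^5n^4)$, and size at most $(Kn)^{O(K^5n^3)}$. Summing over the $O(n^2)$ coordinates $\lambda_\alpha$ of $P\C_{n+1}[x,y]$ and over the $O(K)$ insertion positions $m$, while absorbing the base-point corrections $B_{j,\alpha}(p)$ by the same mechanism applied inductively to shorter iterated integrals, the estimates \eqref{eq:deg sum rat} multiply the $\lambda$-degree by $O(Kn^2)$ and raise the size to the $O(Kn^2)$-th power in the exponent, giving $\lambda$-degree $\le O(K^6n^6)$ and size $\le (Kn)^{O(K^6n^5)}$, matching the stated bounds up to the absolute constants in the $O(\cdot)$.

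The chief technical obstacle is the careful derivation of the parameter-differentiation formula for iterated integrals, in particular the bookkeeping of the boundary contributions arising from the $\lambda$-dependent motion of $p_0(\lambda)$. The implicit differentiation $dp_0=-\bigl((\partial H/\partial\lambda_\alpha)/(\partial H/\partial y)\bigr)\big|_{(0,p_0)}\,d\lambda_\alpha$ is rational in $\lambda$ and algebraic in $p_0$, and this algebraic dependence is precisely what becomes polynomial-in-$p$ dependence of $\Omega_K$ once the base point $p$ is promoted to an independent variable — the observation that will justify the subsequent lift to $B_n$ in \eqref{def:Bn}. Organizing these boundary terms into the standard basis of iterated integrals requires an induction on length $k$ using the integration-by-parts identities of \S\ref{sec:int-by-parts}; this induction is a substantial bookkeeping burden but introduces no further blow-up in the exponents beyond the factors already accounted for.
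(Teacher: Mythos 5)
Your overall architecture --- differentiate each basic iterated integral in $\lambda_\alpha$ via the Gelfand--Leray formula of \S\ref{sec:diff}, reduce the resulting integrals to the basis $\{I_l\}$ by Proposition~\ref{lin-eq-1}, and track the base-point contributions as polynomial-in-$p$ corrections --- is the same as the paper's. But there is a genuine gap at the central step: you assert that the inserted form $\theta_\alpha^{(m)}$, ``built from $\partial H/\partial\lambda_\alpha$ and the Gelfand--Leray quotient of $d\omega_{i_m}$ by $dH$,'' is a \emph{polynomial} 1-form of bivariate degree $O(n)$, $\lambda$-degree $O(1)$ and size $O(1)$. This object does not exist as described. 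The Gelfand--Leray quotient $d\omega_i/dH$ requires writing $d\omega_i = \nu\wedge dH$, i.e.\ solving $fH_y - gH_x = P$ for the coefficient $P$ of $d\omega_i$; since $P$ generically does not lie in the ideal $(H_x,H_y)$, the representative $\nu$ is not polynomial (this is exactly why the paper later multiplies by $m(H)=\prod(H-c_i)$, which \emph{does} lie in the Jacobian ideal --- at the cost of degree $O(n^3)$ and nontrivial $\lambda$-dependence, not $O(1)$). Proposition~\ref{lin-eq-1} takes as input a form with controlled polynomial degree, $\lambda$-degree and size, so feeding it a nonexistent polynomial representative invalidates the bookkeeping that follows.

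The missing idea is the mechanism of equations \eqref{eq:JacId}--\eqref{rat-coef}: since the $d\omega_j$ form a basis of $\Lambda^2(\C^2)/dH\wedge\Lambda^1(\C^2)$, one writes $Hd\omega_i=\sum_j a_{ij}\,d\omega_j+\eta_i\wedge dH$, multiplies by the monomial $\partial H/\partial\lambda_\alpha$, divides by $dH$, decomposes $\frac{\partial H}{\partial\lambda_\alpha}\eta_i$ by Proposition~\ref{prop:petrov}, and then inverts the matrix $A=(a_{ij})$. The crucial point is that the integration cycle lies on $\{H=0\}$, so the left-hand side $H\sum\breve a_{ij}\frac{d\omega_j}{d\lambda_\alpha}$ vanishes upon integration, leaving $\frac{d\omega_i}{d\lambda_\alpha}$ expressed as $-\sum_j q^\alpha_{ij}(0)\,\omega_j - dg^\alpha_i$ with coefficients rational in $\lambda$ of explicitly bounded degree ($O(n^6)$, not $O(1)$) and size. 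Only after this substitution does Proposition~\ref{lin-eq-1} apply, and the degree and size estimates of the conclusion are driven by the complexity of $q^\alpha_{ij}$ and $g^\alpha_i$ (degrees $O(n^6)$ and $O(n^7)$ in $\lambda$, sizes $n^{O(n^5)}$), which your accounting omits entirely. You should either reproduce this inversion argument or replace it by the $m(H)$-multiplication trick with the correct (much larger than $O(1)$) degree and size of $m$ as a function of $\lambda$; in either case the constants you propagate through \eqref{eq:deg sum rat} must be recomputed.
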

\begin{proof} According to Proposition~\ref{prop:g-l-iterated} for any tuple $\{\eta_i\}_{i=1}^K$, with $\eta_i\in\mathcal{B}$, 
\begin{equation}\label{g-l-rat}
    \frac{\partial}{\partial\lambda_\alpha}\oint \eta_1 \dots \eta_K = -\sum_{i=1}^K \oint \eta_1 \dots \eta_{i-1}\left(\frac {d\eta_i}{d\lambda_\alpha}\right) \eta_{i+1} \dots \eta_K.
\end{equation}
Our goal is to express the Gelfand-Leray derivatives $\frac {d\omega_i}{d\lambda_\alpha}$ as rational combinations of forms $\omega_j$.
Consider the form $H d\omega_i$. Differentials of the elements of  $\mathcal{B}$ form a basis of $\Lambda^2(\C^2)/dH\wedge\Lambda^1(\C^2)$, so one can write
\begin{equation}\label{eq:JacId}
H d\omega_i=\sum_{j=1}^{n^2} a_{ij}d\omega_j+\eta_i\wedge dH,\quad i=1,...,n^2.
\end{equation}
Multiplying by the monomial $\frac{\partial H}{\partial \lambda_\alpha}$, dividing by $dH$ and decomposing $\frac{\partial H}{\partial \lambda_\alpha}\eta_i$ as in  Proposition~\ref{prop:petrov}
we get, modulo some multiple of $dH$,
\begin{equation}\label{eq:GMconn}
H\frac{d\omega_i}{d \lambda_\alpha}=\sum_j a_{ij}\frac{d\omega_j}{d\lambda_\alpha}+\left(\sum_j b^\alpha_{ij}(H) \omega_j+f^\alpha_idH+d\tilde{g}^\alpha_i\right), \qquad i,j=1,...,n^2.
\end{equation}
(we follow closely \cite[A.3]{BNY-Inf16}). Multiplying both sides of this system of equations by inverse $\breve{A}$ of the matrix $A=\{a_{ij}\}$ we get
\begin{equation}\label{rat-coef}
    H\sum \breve{a}_{ij}\frac{d\omega_j}{d \lambda_\alpha}=\frac{d\omega_i}{d\lambda_\alpha} + \sum_{j=1}^{n^2} q_{ij}^\alpha(H)\omega_j+ dg^\alpha_i
\end{equation}
where coefficients $q^\alpha_{ij}(H)$ are polynomial in $H$ with coefficients being rational functions of $\lambda$.
\par Integrating over $\delta\subset\{H=0\}$, we get
\begin{equation}\label{g-l-long-rat}
\begin{split}
    \frac{\partial}{\partial\lambda_\alpha}\oint \eta_1 \dots \eta_K = -\sum_{i=1}^K \sum_{j=1}^{n^2} q_{ij}^\alpha(0)\oint \eta_1 \dots \eta_{i-1} (\omega_j) \eta_{i+1} \dots \eta_n \\
    - \sum_{i=1}^K  \oint \eta_1 \dots \eta_{i-1} (dg_i^\alpha) \eta_{i+1} \dots \eta_n
\end{split}
\end{equation}
Using Proposition~\ref{lin-eq-1}, we can express the integrals from the right hand side of the latter equation as a combination of basic ones.

Now, let us estimate the degrees and sizes of the coefficients of  $ {\Omega}_K(\lambda,p)$.
The degree in $x,y$ of the form $\eta_i$ is equal to the degree of $\omega_i$, i.e. is at most $2n$. To find the degrees in $\lambda$ and sizes in \eqref{eq:JacId} note that this is a system of $O(n^2)$ linear equations on $a_{ij}$ and coefficients of $\eta_i$ of degree $1$ in $\lambda$ and of size $O(n)$. Therefore $a_{ij}$ and $\eta_i$ can be chosen of degree $O(n^2)$ in $\lambda$ and of size $n^{O(n^2)}$.

Bounds of Proposition~\ref{prop:petrov} imply that the degrees $\deg_H b^\alpha_{ij}\le 3=O(1)$, degree in $x,y$ of $\tilde{g}^\alpha_i$ is bounded by $O(n)$, and degrees in $\lambda$ and sizes of $b^\alpha_{ij}$, $\tilde{g}^\alpha_i$ are bounded above by $O(n^3)$ and $n^{O(n^3)}$ respectively.  This implies that degrees and sizes of $\breve{a}_{ij}$ are bounded by $O(n^4)$ and $n^{O(n^2)}$ respectively, so $q_{ij}^\alpha$ are polynomials in $H$ of degree $O(1)$, of degree $O(n^6)$ in $\lambda$ and of size $n^{O(n^2)}$. The functions $g^\alpha_i$ are polynomial in $x,y$ of degree $O(n)$, and their degrees in $\lambda$ and sizes are $O(n^7)$ and $n^{O(n^5)}$  correspondingly. Bounds of Proposition~\ref{lin-eq-1} now imply that the coefficients of $ {\Omega}_K(\lambda,p)$ are rational functions of $p,\lambda$, polynomial in $p$ of degree at most $O(nK)$, and of degrees in $\lambda$ and sizes at most $O(K^6n^7)$ and $(Kn)^{O(K^6n^5)}$.
\end{proof}
\subsection{Changing the variables (lifting)}
The coefficients of the connection \eqref{g-l-long-matrix} depend algebraically on $\lambda\in P\C_{n+1}[x,y]$, since the base point $p=p(\lambda)$ of the fundamental group
is not defined uniquely. Let $\ev : B_n\to P\C_{n+1}[x,y]$ be the map $\ev(H,y)=H-H(y)$, where $B_n$ was defined in \eqref{def:Bn}. Let $\widetilde{S}_K=S_K\circ\ev$ be the lifting of the matrix $S_K$ to $B_n$. The coefficients of the pulled-back connection
 \begin{equation}\label{eq:Omega_K}
d\widetilde{S}_K=\widetilde{\Omega}_K \widetilde{S}_K, \quad \Omega_K=\ev^* {\Omega}_K,
\end{equation}
on $B_n\times U/\mathfrak{m}^{K+1}$ are rational one-forms on $B_n$.

\begin{proposition}\label{prop:II sys compl}
Degree and the size of the matrix $\widetilde{\Omega}_K$ are bounded as  $O(K^6n^8)$ and $(Kn)^{O(K^6n^7)}$ correspondingly.
\end{proposition}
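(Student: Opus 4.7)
The plan is to track how the bounds of Proposition~\ref{prop:tildeOmega K} transform under the pullback $\ev^*$. Everything reduces to substitution of variables: I would begin by unpacking $\ev$ in coordinates. A point of $B_n$ is a pair $(H,y)$ with $H=\sum_{(i,j)\neq(0,0)}\lambda_{ij}x^iy^j$ and $y$ the coordinate on $\C P^1\cong\sigma$. The image $\ev(H,y)=H-H(0,y)$ differs from $H$ only in the constant term, so in the standard coordinates $\lambda_{ij}$ on $P\C_{n+1}[x,y]$, the pullback leaves $\lambda_{ij}$ untouched for $(i,j)\neq(0,0)$ and replaces
\[
\lambda_{00}\ \longmapsto\ -\sum_{j=1}^{n+1}\lambda_{0j}\,y^j.
\]
Moreover the base-point coordinate $p\in\sigma$ that appears in $\Omega_K$ becomes simply $y$. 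Consequently $d\lambda_{00}$ pulls back to $-\sum_{j=1}^{n+1}\bigl(y^j\,d\lambda_{0j}+j\lambda_{0j}y^{j-1}\,dy\bigr)$, which introduces only an $O(n^2)$ factor in norm and does not raise degrees.

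Next I would bound the degree of $\widetilde\Omega_K$. By Proposition~\ref{prop:tildeOmega K}, $\Omega_K$ has $\deg_p\le O(nK)$ and $\deg_\lambda\le O(K^6n^7)$. The $y$-degree of $\widetilde\Omega_K$ gets two contributions: the direct substitution $p\mapsto y$ contributes $O(nK)$, and the substitution of $\lambda_{00}$ by a polynomial of $y$-degree $n+1$ contributes at most $(n+1)\deg_{\lambda_{00}}\Omega_K\le (n+1)\cdot O(K^6n^7)=O(K^6n^8)$. Since the substitute is linear in the remaining coordinates $\lambda_{0j}$, the total $\tilde\lambda$-degree does not exceed $\deg_\lambda\Omega_K=O(K^6n^7)$. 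Adding the contribution of $dy$ (handled above) changes nothing. Hence $\deg\widetilde\Omega_K=O(K^6n^8)$.

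For the size I would use multiplicativity of the norm under substitution. Represent each rational coefficient of $\Omega_K$ as $P/Q$ with $\|P\|+\|Q\|\le(Kn)^{O(K^6n^5)}$. Substituting a polynomial of norm $n+1$ for $\lambda_{00}$ raises the norm by at most a factor of $(n+1)^{\deg_{\lambda_{00}}}\le(n+1)^{O(K^6n^7)}$; substituting $p\mapsto y$ (norm $1$) contributes nothing; and the auxiliary $O(n^2)$ factor from $d\lambda_{00}$ is negligible. Combining,
\[
\textbf{S}(\widetilde\Omega_K)\ \le\ (Kn)^{O(K^6n^5)}\cdot(n+1)^{O(K^6n^7)}\ \le\ (Kn)^{O(K^6n^7)},
\]
matching the claimed bound. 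The only steps that require any care are: (i) confirming that the lifting touches only the coordinate $\lambda_{00}$ and the base-point variable $p$, so that no other coordinate acquires a high-degree substitute; and (ii) choosing the representation $P/Q$ once and for all so that the same denominator works after substitution, making the norm multiplicativity argument straightforward. Neither is a real obstacle—this proposition is essentially a bookkeeping corollary of Proposition~\ref{prop:tildeOmega K}.
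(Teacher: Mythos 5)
Your proposal is correct and follows essentially the same route as the paper: both arguments treat the lifting as a substitution of degree $n+1$ with coefficients of norm $O(n)$ into the entries of $\Omega_K$, multiply the $\lambda$-degree bound $O(K^6n^7)$ by $n+1$ to get the degree bound, and use multiplicativity of the norm under substitution (size of $\ev$ raised to $\deg_\lambda\Omega_K$) to get the size bound $(Kn)^{O(K^6n^5)}\cdot n^{O(K^6n^7)}=(Kn)^{O(K^6n^7)}$. Your version merely makes explicit that only $\lambda_{00}$ and $p$ are affected, which the paper leaves implicit.
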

\begin{proof}
Degree of the mapping $\ev$ is $n+1$, and it has  coefficients equal to $0$ or $1$.  Therefore degree of  $\widetilde{\Omega}_K$ is at most $O(K^6n^8)$. Sizes of coefficients of $\widetilde{\Omega}_K$  will not exceed sizes of coefficients of $ {\Omega}_K(\lambda,p)$ multiplied by the size of $\ev$ raised to $\deg_{\lambda} {\Omega}_K(\lambda,p)$, i.e. $(Kn)^{O(K^6n^5)} n^{O(K^6n^7)}=(Kn)^{O(K^6n^7)}$.
\end{proof}

\section{Properties of the system}\label{sec:properties}

\subsection{Quasi-Unipotency}

\begin{proposition}\label{prop:quasi-unipot regular}
   Connection \eqref{eq:Omega_K} is  quasi-unipotent and regular.
\end{proposition}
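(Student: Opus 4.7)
The plan is to reduce each of the two properties to the already-established case $K=1$ (Theorem~\ref{th:non-deg-AI} and the hypotheses of Theorem~\ref{th:upper-bound} it feeds into), exploiting the filtration of $U/\mathfrak{m}^{K+1}U$ by powers of the augmentation ideal.

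For regularity, I would verify polynomial growth of every entry of $\widetilde S_K$ along an arbitrary holomorphic disk $\gamma\colon(\C,0)\to(B_n,b)$ with $b\in\Sigma$. Parametrizing the family of cycles $\delta(s)$ along $\gamma$, one observes that they are real semi-algebraic in the coordinates $(\lambda,p)$, so their lengths in any compact chart grow at most polynomially in $|s|^{-1}$, while the polynomial basic forms $\omega_j$ are polynomially bounded on them. The elementary estimate
$$
\left|\oint_{\delta(s)}\omega_{i_1}\dots\omega_{i_k}\right| \le \frac{1}{k!}\prod_{j=1}^k \bigl(L(s)\cdot M_{i_j}(s)\bigr),
$$
with $L(s)$ an upper bound for the length of $\delta(s)$ and $M_i(s)$ for $\sup_{\delta(s)}|\omega_i|$, then yields $\|\widetilde S_K(\gamma(s))\|=O(|s|^{-p})$. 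The reciprocal bound on $\widetilde S_K^{-1}$ follows by Cramer's rule, once one notes that $\det\widetilde S_K$ is itself a polynomial expression in iterated integrals and therefore satisfies a bound of the same type.

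For quasi-unipotency, I would invoke Kashiwara's theorem to reduce the question to a generic point of each codimension-one component of $\Sigma$. A small loop around such a point deforms the cycles indexing the columns of $\widetilde S_K$ and therefore acts on $\pi_1(\Gamma_H,p_0)$ by a group automorphism $T$; the Chen homomorphism $\varphi_K$ of Definition~\ref{def-chen} intertwines $T$ with a linear automorphism of $U/\mathfrak{m}^{K+1}U$. The key structural fact is that $\mathfrak{m}$ is the augmentation ideal of $U$, hence intrinsic, so the induced linear map preserves the finite filtration of $U/\mathfrak{m}^{K+1}U$ by powers of $\mathfrak{m}$. Since $U$ is the free associative algebra on $X_1,\dots,X_N$, the $j$-th graded piece $\mathfrak{m}^j/\mathfrak{m}^{j+1}$ is canonically the $j$-fold tensor power of $\mathfrak{m}/\mathfrak{m}^2$, and the induced action is the corresponding tensor power of the action on $\mathfrak{m}/\mathfrak{m}^2$. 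The latter is canonically the abelianized monodromy on $H_1(\Gamma_H,\C)$, which is quasi-unipotent by the $K=1$ case of Theorem~\ref{th:non-deg-AI}. Tensor powers of quasi-unipotent operators are quasi-unipotent (products of roots of unity are roots of unity), and an operator that is quasi-unipotent on every graded piece of a preserved finite filtration is itself quasi-unipotent; this completes the reduction.

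The main obstacle I expect is the monodromy bookkeeping: one must verify that the multivalued geometric monodromy of $\widetilde S_K$ is truly realized through an automorphism of $\pi_1(\Gamma_H,p_0)$ compatible with $\varphi_K$, which requires the base point $p_0$ to be trivialized with respect to the parameter. This is precisely what motivates the passage from $P\C_{n+1}[x,y]$ to $B_n$ in \eqref{def:Bn}: on $B_n$ the point $p_0$ is an independent coordinate, so parallel transport of cycles along a parameter loop is unambiguous. Once this identification is in place, the graded argument above is formal and completes the proof.
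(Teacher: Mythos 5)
Your quasi-unipotency argument is essentially the paper's: the proof in the text also filters $U/\mathfrak{m}^{K+1}U$ by powers of $\mathfrak{m}$ (equivalently, observes that $\widetilde\Omega_K$ is lower-block-triangular because differentiation does not increase the length of an iterated integral), identifies the graded piece $\mathfrak{m}^{j}/\mathfrak{m}^{j+1}$ with the $j$-th tensor power of the length-one piece (Lemma~\ref{kroneker}, the Kronecker-sum structure), and concludes from quasi-unipotency of the Abelian-integral monodromy that tensor powers, and then the whole block-triangular operator, are quasi-unipotent. Your remark about the base point being trivialized on $B_n$ is also exactly the role of the lifting \eqref{def:Bn}. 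So for that half you may simply cite the filtration and Lemma~\ref{kroneker}.

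For regularity you take a genuinely different route: a direct growth estimate on the entries of $\widetilde S_K$ via the bound $\bigl|\oint\omega_{i_1}\dots\omega_{i_k}\bigr|\le\frac{1}{k!}\prod_j L(s)M_{i_j}(s)$, whereas the paper deduces regularity structurally, from regularity of $\Theta_{11}$ (the Gauss--Manin connection for Abelian integrals) together with the facts that Kronecker sums and semidirect products (extensions) of regular connections are regular, citing Deligne. Your route is more elementary in spirit but has two gaps as written. First, the assertion that the lengths of the continued cycles $\delta(s)$ grow at most polynomially in $|s|^{-1}$ is the analytic heart of the classical regularity theorem for period integrals and is not automatic: the cycles are chosen representatives of homotopy classes continued along a path into the singular locus, and one must actually construct representatives with controlled length in each sector (semi-algebraicity of the fibers does not by itself control the chosen representatives). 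Second, the Cramer's-rule step for $\widetilde S_K^{-1}$ requires a \emph{lower} bound on $|\det\widetilde S_K|$ of the form $c|s|^{p}$, and your observation that the determinant is a polynomial expression in iterated integrals only gives an upper bound; you would need, e.g., that $\det\widetilde S_K$ is a product of powers of $\det S_1$ (by the block-triangular/tensor structure) and that $\det S_1$ is algebraic, or else derive the decay bound from the scalar equation $d\log\det = \tr\widetilde\Omega_K$. The paper's appeal to stability of regularity under extensions sidesteps both issues, which is what that approach buys.
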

 \begin{proof}
From  \eqref{g-l-long-rat} we see that derivatives of an iterated integral are linear combination of iterated integrals of smaller or equal length.
This means that $\Omega_K$ is a lower-block-triagonal matrix:
$$
\Omega_K =
\begin{pmatrix}
0&0&0& &0\\
*&\Theta_{11} & 0 &  & 0\\
  &*& \Theta_{22} & & 0\\
  & *&  & \cdots & 0\\
  & * & & & \Theta_{nn}
\end{pmatrix},
$$
where each block $\Theta_{ii}$ is $k_i\times k_i$ matrix corresponding to the integrals of length exactly $i$
Note that $\Theta_{11}$ is just the pull-back by $\ev^*$ of the matrix of the Gauss-Manin connection for Abelian integrals, so has quasiunipotent monodromy by \cite{kashiwara:qu}.

Since $\widetilde{\Omega}_K$ is lower-block-triagonal, it preserves the flag $\mathcal{F}=\{0\}=\mathcal{F}_{K+1}\subset \mathcal{F}_K\subset...\subset \mathcal{F}_0=U/\mathfrak{m}^{K+1}$, where $\mathcal{F}_i= \mathfrak{m}^{i} /\mathfrak{m}^{K+1}$,
so any monodromy operator corresponding to $\Omega_K$ preserves this flag as well, i.e. is lower-triangular with blocks $M_{ii}$ on diagonal. Therefore its eigenvalues are just the eigenvalues of the monodromy operators $M_{ii}$ corresponding to $\Theta_{ii}$, the connection induced by $\widetilde{\Omega}_K$ on the factor-bundle with fiber $\mathcal{F}_i/\mathcal{F}_{i+1}=\mathfrak{m}^{i} /\mathfrak{m}^{i+1}$.

\begin{lemma}\label{kroneker}
For all $1 \leq k \leq n$, $\Theta_{kk}= P(\bigoplus_{i=1}^k \Theta_{11})P^{-1}$ for some permutation matrix $P$, where we use the notation of \emph{Kroneker sum}: $A\oplus A = A \otimes I + I \otimes A$. Correspondingly, $M_{kk}= PM_{11}^{\otimes k}P^{-1}$.
\end{lemma}

The first claim is just a way to say that the differentiation of iterated integrals satisfies Leibnitz rule, up to iterated integrals of lesser length.
This can be seen from \eqref{g-l-rat}. Now, derivation of the products of Abelian integrals satisfy the same rule, so horizontal sections of $\Theta_{kk}$ are described by these products, and monodromy operators of  are just tensor powers of monodromy $M_{11}$ of Abelian integrals.
This proves quasiunipotency of  $\widetilde{\Omega}_K$  since tensor powers of  quasiunipotent operators are quasiunipotent.

Regularity of $\widetilde{\Omega}_K$ follows from regularity of $\Theta_{kk}$ and the fact that semidirect product of regular connections is regular, see \cite{deligne}.

\end{proof}

\section{Proof of Theorem~\ref{num-of-zeroes-m-k}}\label{sec:Mk as II}

Let $\delta\subset \{H=0\}$ be a cycle and $U$ its small neighborhood, and let $M_K$ be  the first non-zero Melnikov function defined as in \eqref{eq:M_k def}. Recall the construction expressing $M_K$ as a polynomial in iterated integrals of the perturbation form $\omega$ and its Gelfand-Leray derivatives up to order $K$.

\begin{definition}\label{rel-exact}
A real analytic 1-form $\alpha\in \Lambda^1(U)$ is \emph{relatively exact} with respect to the integrable foliation $\mathcal{F} = \{dH = 0\}$ in a domain $U$, if
\begin{equation}
    \alpha = h\cdot dH + dg, ~~~ h,g \in \mathcal{O}(U)
\end{equation}
\end{definition}
Clearly, the integral of a relatively exact form $\alpha$ along any closed oval on any level curve $\{H=z\}\subset U$, vanishes:
\begin{equation}
\forall \text{ oval } \delta \subseteq \{f=z\} ~~~ \oint_\delta \alpha = 0
\end{equation}

Define the sequence of real analytic 1-forms $\omega_1, \omega_2, \dots, \omega_k$ as follows:
\begin{enumerate}
\item (Base of induction). $\omega_1 = \omega$ is the perturbation form from \eqref{perturbed}
\item (Induction step). If the forms $\omega_1, \dots, \omega_j$ are already constructed and turned out to be relatively exact, then $\omega_j = h_j\cdot dH+dg_j$. In this case we define
\begin{equation}\label{omega-induction}
\omega_{j+1} = -h_j\omega
\end{equation}
\end{enumerate}

\begin{theorem}\cite[Theorem 26.7]{iy:lade} If $\omega_k$, $k\geq 2$, is the first not relatively exact 1-form in the sequence $\omega_1, \dots, \omega_{k-1},\omega_k$, constructed inductively by \eqref{omega-induction}, then
\begin{equation}
    M_k(z) = -\oint_{\{H=z\}} \omega_k
\end{equation}
\end{theorem}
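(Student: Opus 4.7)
The plan is to compute the displacement function $\Delta(z,\varepsilon)-z$ along arcs of the perturbed foliation and to read off the coefficient of $\varepsilon^k$, using the relative exactness of $\omega_1,\dots,\omega_{k-1}$ as a descent tool that trades one power of $\varepsilon$ at each step. Parameterize the transversal $\sigma$ by $z=H|_\sigma$ (possible by transversality of $\sigma$ to the level curves near $\delta$) and let $\gamma_\varepsilon$ be the arc of a perturbed trajectory from the starting point $P_0=P_0(z)\in\sigma$ to its first return $P_1\in\sigma$. Since $dH=-\varepsilon\omega$ along every orbit,
$$
\Delta(z,\varepsilon)-z \;=\; H(P_1)-H(P_0)\;=\;\int_{\gamma_\varepsilon} dH\;=\;-\varepsilon\!\int_{\gamma_\varepsilon}\omega_1.
$$
For each $j<k$ the identity $\omega_j=h_j\,dH+dg_j$, combined with $dH=-\varepsilon\omega$ on trajectories and the recursive definition $\omega_{j+1}=-h_j\omega$, gives $\omega_j=\varepsilon\,\omega_{j+1}+dg_j$ as a restriction to $\gamma_\varepsilon$, hence
$$
\int_{\gamma_\varepsilon}\omega_j \;=\; \varepsilon\!\int_{\gamma_\varepsilon}\omega_{j+1} + \bigl[g_j(P_1)-g_j(P_0)\bigr].
$$
Iterating from $j=1$ up to $j=k-1$ produces the closed expansion
$$
\Delta(z,\varepsilon)-z \;=\; -\varepsilon^k\!\int_{\gamma_\varepsilon}\omega_k \;-\; \sum_{j=1}^{k-1}\varepsilon^j\bigl[g_j(P_1)-g_j(P_0)\bigr].
$$

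The remaining task is to show that the boundary contributions $\varepsilon^j\bigl[g_j(P_1)-g_j(P_0)\bigr]$ all enter at orders strictly higher than $\varepsilon^k$. I would write $g_j(P_1)-g_j(P_0)=G_j(\Delta)-G_j(z)$ with $G_j:=g_j\circ(\text{parameterization of }\sigma)$, and Taylor-expand in $\Delta-z$; since $\Delta(z,0)=z$ the series starts at order $(\Delta-z)^1$. Substituting the expansion $\Delta-z=\sum_{\ell\ge1}\varepsilon^\ell M_\ell(z)$ and comparing coefficients of $\varepsilon^\ell$ on both sides of the displayed identity shows that the coefficient of $\varepsilon^\ell$ in $\varepsilon^j\bigl[G_j(\Delta)-G_j(z)\bigr]$ is a polynomial in $M_1,\dots,M_{\ell-j}$ with no constant term. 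By strong induction on $\ell$, the vanishing $M_1=\dots=M_{\ell-1}=0$ then forces all these boundary contributions to vanish at order $\varepsilon^\ell$. For $\ell<k$ the main term $-\varepsilon^k\!\int_{\gamma_\varepsilon}\omega_k$ has no $\varepsilon^\ell$ contribution, so $M_\ell=0$, closing the induction. At $\ell=k$ the integral $\int_{\gamma_\varepsilon}\omega_k$ tends to $\oint_{\delta_z}\omega_k$ as $\varepsilon\to 0$ while all boundary terms vanish at this order by the same inductive reason, yielding
$$
M_k(z)\;=\;-\oint_{\{H=z\}}\omega_k.
$$

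The main subtlety lies in this bookkeeping: a priori a single boundary contribution $\varepsilon^j[g_j(P_1)-g_j(P_0)]$ could be as coarse as $O(\varepsilon^{j+1})$, which would pollute every coefficient $M_\ell$ with $\ell>j$. The argument closes only because the bootstrap is run in lockstep: at each intermediate order $\ell<k$ the relative exactness $\omega_\ell=h_\ell dH+dg_\ell$ combined with $dH\equiv 0$ on $\delta_z\subset\{H=z\}$ kills the closed integral $\oint_{\delta_z}\omega_\ell$, which in turn lifts the order of $\Delta-z$ and thereby the order of every $g_j$-term in the next step. The cleanest way to formalize this is to carry out the whole computation in the ring of formal power series in $\varepsilon$ with coefficients analytic in $z$, turning the cancellations into purely algebraic identities.
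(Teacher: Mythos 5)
The paper does not prove this statement---it is quoted verbatim from \cite[Theorem 26.7]{iy:lade} (Fran\c{c}oise's algorithm)---so there is no internal proof to compare against. Your argument is correct and is essentially the standard proof from that reference: integrating $dH=-\varepsilon\gamma_\varepsilon^*\omega$ along the perturbed arc, descending through the relatively exact forms via $\omega_j|_{\gamma_\varepsilon}=\varepsilon\,\omega_{j+1}|_{\gamma_\varepsilon}+dg_j|_{\gamma_\varepsilon}$, and disposing of the boundary terms $g_j(P_1)-g_j(P_0)=O(\Delta-z)$ by the lockstep induction on the order in $\varepsilon$, which is exactly the point that needs care and which you handle correctly.
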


Evidently, the functions $h_j$ can be restored as $h_j=-\int\frac{d\omega_j}{dH}$, so
$$
\omega_{j+1} =\omega\int\frac{d}{dH}\left(-\omega\int\frac{d}{dH}\left(\dots\left(-\omega\int\frac{d\omega}{dH}\right)\dots\right)\right).
$$

Denote by $\phi$ the algebraic function $H|_{\sigma}^{-1}\circ H$ of $x,y$ which maps the point $(x,y)$ to the (one of $d+1$) point of the transversal $\sigma=\{x=0\}$ lying on the same level curve of $H$ as $(x,y)$. In other words, $p=\phi(x,y,\tilde{\lambda})$.
\begin{lemma}
$h_j$ in \eqref{omega-induction} is a linear combination of iterated integrals of differential one-forms with coefficients polynomial in $x,y$. The coefficients of this combination are rational in $\tilde{\lambda},p$.
\end{lemma}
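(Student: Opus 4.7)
The plan is to proceed by induction on $j$. For the base case $j=1$, since we are in the regime $K\geq 2$ (so $M_1\equiv 0$), the perturbation $\omega_1=\omega$ is relatively exact. Applying Proposition~\ref{prop:petrov} gives a decomposition $\omega=\sum_i c_i(H)\omega_i+f\,dH+dg$ with $f,g\in\Q(\tilde\lambda)[x,y]$. Integrating over an arbitrary vanishing cycle $\delta\subset\{H=z\}$ yields $\sum_i c_i(z)\oint_\delta\omega_i\equiv 0$; combined with the nondegeneracy of the basic period matrix from Theorem~\ref{th:non-deg-AI}, this forces $c_i\equiv 0$. Hence $\omega=f\,dH+dg$ and $h_1=f$ is polynomial in $x,y$ with coefficients rational in $\tilde\lambda$, which is trivially a length-zero iterated integral combination.

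For the inductive step, I would assume $h_j=\sum_\alpha r_\alpha(\tilde\lambda,p)\,J_\alpha(x,y)$ with each $J_\alpha=\int_p^{(x,y)}\eta_1^\alpha\cdots\eta_{k_\alpha}^\alpha$ an iterated integral of polynomial one-forms $\eta_l^\alpha$ along the level curve of $H$. Using $\omega=f\,dH+dg$ and the identity $h_j\,dg=d(h_j g)-g\,dh_j$, I would rewrite
\begin{equation*}
\omega_{j+1}=-h_j\omega=-(fh_j)\,dH-d(h_j g)+g\,dh_j.
\end{equation*}
Taking $d$ and equating with $dh_{j+1}\wedge dH=d\omega_{j+1}$, the exact term drops out and one obtains the recursion $d(h_{j+1}+fh_j)\wedge dH=-dh_j\wedge dg$. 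Integrating along the level curve from $p$ to $(x,y)$ expresses $h_{j+1}$ as $-fh_j$ (a polynomial-in-$(x,y)$ multiple of the inductive $h_j$, still of the required form) plus a level-constant term (rational in $\tilde\lambda$ and $p$) plus the Gelfand-Leray integral $-\int_p^{(x,y)}(dh_j\wedge dg)/dH|_{\mathrm{lc}}$. The crucial structural fact is that $p=\phi(x,y,\tilde\lambda)$ is constant along each level curve (it depends on $(x,y)$ only through $H$), so $dr_\alpha$ is a multiple of $dH$ and $dh_j\equiv\sum_\alpha r_\alpha\,dJ_\alpha\pmod{dH}$. The Chen-style differentiation of Section~\ref{sec:diff} then gives $dJ_\alpha|_{\mathrm{lc}}=J_\alpha^{(k_\alpha-1)}\eta_{k_\alpha}^\alpha|_{\mathrm{lc}}$, so after integration the Gelfand-Leray contributions take the form $\sum_\alpha\tilde r_\alpha(\tilde\lambda,p)\int_p^{(x,y)}\eta_1^\alpha\cdots\eta_{k_\alpha-1}^\alpha\,(g\,\eta_{k_\alpha}^\alpha)|_{\mathrm{lc}}$: iterated integrals of length $k_\alpha$ with polynomial integrands, as required.

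The hard part will be correctly handling the Gelfand-Leray quotient $(dh_j\wedge dg)/dH|_{\mathrm{lc}}$, because the transverse component of $dh_j$ introduces rational factors like $1/H_x$ or $1/H_y$, so the naive integrand is rational rather than polynomial. To clear these denominators, I would apply Proposition~\ref{prop:petrov} once more to each auxiliary polynomial 1-form that arises and use the integration-by-parts identities of Section~\ref{sec:int-by-parts} to absorb exact pieces back into iterated integrals, pushing all rational dependence into the outer coefficients (rational in $\tilde\lambda$ and $p$) while keeping only polynomial 1-forms under the integral sign. This bookkeeping is essentially the same pattern as in Proposition~\ref{lin-eq-1}, and does not require new ideas beyond the inductive hypothesis and the Petrov decomposition.
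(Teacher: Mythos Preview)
Your base case has a genuine gap. You assume that $M_1\equiv 0$ forces the Petrov coefficients $c_i$ to vanish, so that $\omega=f\,dH+dg$ with \emph{polynomial} $f,g$. But $M_1\equiv 0$ is only the vanishing of $\oint_{\delta_t}\omega$ along the single nest $\{\delta_t\}$; it does not give $\oint_\delta\omega=0$ for a basis of $H_1(\Gamma_H)$, so the nondegeneracy of the period matrix from Theorem~\ref{th:non-deg-AI} cannot be invoked. The implication ``$M_1\equiv 0\Rightarrow\omega$ is polynomially relatively exact'' is precisely Ilyashenko's theorem for \emph{generic} $H$, and its failure for degenerate $H$ is the whole reason one studies higher Melnikov functions. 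Since your entire inductive step is also built on the decomposition $\omega=f\,dH+dg$, the argument does not get off the ground. Moreover, even granting that decomposition, your proposed fix for the Gelfand--Leray denominators (``apply Proposition~\ref{prop:petrov} to each auxiliary polynomial 1-form'') does not apply: the forms arising from $\tfrac{dh_j\wedge dg}{dH}$ involve factors like $1/H_y$ and are rational, not polynomial, so Proposition~\ref{prop:petrov} is not available.

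The paper's proof avoids both problems by a different mechanism. It starts the induction at $h_0=1$ and uses directly the formula $h_{j+1}=\int\tfrac{d(h_j\omega)}{dH}$, never assuming any relative exactness of $\omega$ in the polynomial sense. The denominator issue is handled by the algebraic fact (from \cite{gavrilov:iterated}) that $m(H)=\prod_i(H-c_i)$ lies in the Jacobian ideal of $H$, so the operator $\theta\mapsto\tfrac{m(H)\,\theta}{dH}$ sends polynomial 2-forms to polynomial 1-forms; one then trades the troublesome $\tfrac{d\theta_i}{dH}$ for $\tfrac{m(H)\,d\theta_i}{dH}$ at the cost of the rational factor $1/m(H(p))$, which is absorbed into the coefficient. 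This is the missing idea in your sketch.
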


\begin{proof}
We have $h_0=1$, which is of the required form trivially.
We proceed by induction on $j$. Assume that  $h_j$ is a finite sum of terms of the type  $R(\lambda,p)\int \theta_1...\theta_k$, where $\theta_i$ are polynomial in $x,y$ one-forms, and $R$ is a rational function. Now,  $h_{j+1}=\int \frac {d \left(h_j\omega\right)}{dH}$, and it is look what happens with  just one term of this sum.
Differentiation, according to \ref{prop:diff iter int}, gives
\begin{equation}\label{eq:h_j+1 is also good}
\begin{split}
\int \frac{d}{dH}\left(R(\lambda,p)\omega\int \theta_1...\theta_k\right)=\frac{\partial R}{\partial p}\left(H|_{\sigma}'(p)\right)^{-1}\int\omega \theta_1...\theta_k+\\
R\int \frac{d\omega}{dH} \theta_1...\theta_k +
R\sum_{i=1}^k\int\omega\theta_1...\frac{d\theta_i}{dH}...\theta_k+\\
R\int\omega\sum_{i=1}^{k-1} \theta_1...\frac{\theta_i\wedge\theta_{i+1}}{dH}...\theta_k+
R\int\omega \theta_1...\frac{\theta_{k-1}\wedge (\sigma\circ H)*\theta_k}{dH}.
\end{split}
\end{equation}
The first term is clearly of the required type.

Denote by $m(H)$ the product $\prod_{i=1}^{n^2}\left(H-c_i\right)$, where $c_i$ are critical values of $H$ (repeated if multiple).
It is well known, see \cite{gavrilov:iterated}[Prop.2.4], that $m(H)$ lies in the Jacobian ideal of $H$, so the operator $\frac{m(H)}{dH}$ preserves polynomial one-forms. Therefore
\begin{equation}
\begin{split}
R\int\omega\sum_{i=1}^{k-1} \theta_1...\frac{\theta_i\wedge\theta_{i+1}}{dH}=\frac R {m(H(p))} \int\omega\sum_{i=1}^{k-1} \theta_1...\frac{m(H)\theta_i\wedge\theta_{i+1}}{dH}\\
R\sum_{i=1}^k\int\omega\theta_1...\frac{d\theta_i}{dH}...\theta_k= \frac R {m(H(p))}\sum_{i=1}^k\int\omega\theta_1...\frac{m(H)d\theta_i}{dH}...\theta_k
\end{split}
\end{equation}
all terms in \eqref{eq:h_j+1 is also good} except the last one are of the required type.

Finally, $(\sigma\circ H)*\theta=\phi_k(p) dH$, where $\phi_k(p)=\frac{\theta(p)(\partial_y)}{dH(p)(\partial_y)}$ is a rational function of $p$, so the last term of \eqref{eq:h_j+1 is also good} is also of the required type.
\end{proof}

\begin{lemma}\label{lem:Mk as lincomb} For a form $\omega$ of degree $d>n$ in $x,y$, we have
$$
M_K=\sum_{i=1}^{N_K}h_iI_i,
$$
where $h_j$ depend rationally on  $p$ and has degree at most $2^{O(K)}dn^6 $ in $p$.
\end{lemma}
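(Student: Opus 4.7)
My plan begins with the Poincar\'e--Pontryagin representation recalled above, $M_K=\oint_{\{H=z\}}h_{K-1}\omega$ (with the sign absorbed into $h_{K-1}$). By the previous lemma, $h_{K-1}$ is a finite sum of terms of the form $R(\tilde\lambda,p)\int\theta_1\cdots\theta_{k}$ with $k\le K-1$, where each $\theta_i$ is a one-form polynomial in $x,y$ and $R$ is rational in $\tilde\lambda$ and $p$. The first observation is that on any closed oval $\delta\subset\{H=z\}$ the base point $p=\phi(x,y,\tilde\lambda)\in\sigma$ is determined by the cycle itself and is therefore constant along $\delta$. Consequently the scalars $R_\alpha$ pull out of the outer integration, giving
\begin{equation*}
M_K=\sum_\alpha R_\alpha(\tilde\lambda,p)\oint_\delta\omega\cdot\theta^\alpha_1\cdots\theta^\alpha_{k_\alpha},\qquad k_\alpha\le K-1,
\end{equation*}
so $M_K$ appears as a linear combination, with $p$-rational coefficients, of iterated integrals of length $\le K$ of polynomial one-forms over the closed cycle~$\delta$.

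Second, I would apply Proposition~\ref{prop:iter Pertov in p} to each of the closed iterated integrals $\oint_\delta\omega\cdot\theta^\alpha_1\cdots\theta^\alpha_{k_\alpha}$, rewriting it as a linear combination of the basic iterated integrals $I_i$ with coefficients polynomial in $p$. Composing with the $R_\alpha$ and regrouping by basic integral immediately produces the desired representation $M_K=\sum_{i=1}^{N_K}h_iI_i$ with $h_i$ rational in $p$.

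The main technical step is the $p$-degree estimate. I would prove by induction on $j$ that the rational coefficients $R$ appearing in the previous lemma's representation of $h_j$ have $p$-degree bounded by some quantity $D_j$ satisfying a recursion of the form $D_{j+1}\le 2D_j+Cn^3$ for an absolute constant $C$; this gives $D_{K-1}=2^{O(K)}n^3$. The key observation is that in the one-step formula~\eqref{eq:h_j+1 is also good} each new $R$ arises from the previous one by differentiation in $p$, multiplication by $\phi_k(p)$ or $(H|_\sigma'(p))^{-1}$ (both of $p$-degree $O(n)$), or division by $m(H(p))$ (of $p$-degree $n^2(n+1)=O(n^3)$, arising from the use of $\frac{m(H)}{dH}$ to preserve polynomiality of the forms), while in parallel the forms $\theta^\alpha_i$ may acquire an additional $m(H)$-factor per inductive step, raising their total $x,y$-degree by $O(n^3)$. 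The factor of two in $D_{j+1}\le 2D_j+\ldots$ is supplied by the sum-of-rationals estimate~\eqref{eq:deg sum rat}. The final application of Proposition~\ref{prop:iter Pertov in p} then contributes to $\deg_p h_i$ an additive term controlled by the total $x,y$-degree $d+O(Kn^3)$ of the integrand $\omega\cdot\theta^\alpha_1\cdots\theta^\alpha_{k_\alpha}$, so that after absorbing subleading contributions the $h_i$ are rational in $p$ of $p$-degree $2^{O(K)}dn^6$.

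The main obstacle will be the degree bookkeeping: the interplay between the per-step doubling forced by~\eqref{eq:deg sum rat}, the additive increments coming from $H|_\sigma'(p)$, $\phi_k(p)$ and $m(H(p))$, and the simultaneous $x,y$-degree inflation of the forms $\theta^\alpha_i$, all have to be tracked carefully and reconciled with the final application of Proposition~\ref{prop:iter Pertov in p} so that the exponent collapses to the stated $2^{O(K)}dn^6$.
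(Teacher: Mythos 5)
Your proposal follows essentially the same route as the paper: pull the $p$-rational coefficients $R_\alpha$ out of the outer integration (since $p$ is constant on each cycle), reduce to closed iterated integrals of polynomial forms, apply Proposition~\ref{prop:iter Pertov in p} to land on the basic integrals $I_i$, and track degrees through the inductive step \eqref{eq:h_j+1 is also good}. The only divergence is in the bookkeeping details --- the paper gets $O(Kn^3)$ for the coefficient degrees by noting the denominators are common (avoiding the doubling from \eqref{eq:deg sum rat}), and tracks the form degrees as $2d_k+O(n^3)$ per step rather than purely additively --- but your slightly lossier recursion still lands within the stated bound $2^{O(K)}dn^6$.
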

\begin{proof}
In the inductive step \eqref{eq:h_j+1 is also good} one iterated integral of forms of degrees at most $d_k$ with coefficient $R$ of degree $\nu_k$ generated $O(k)$ iterated integrals of forms of degrees at most $2d_k+O(n^3)$. 
The coefficients of these new integrals are obtained from $R$ by a combination of a differentiation and division either by $H|_{\sigma}'(p)$ or by $m(H(p))$ or just by division by one of these polynomials. Applying these operation $K$ times we can increase degree of $R$ by at most $O(Kn^3)$. 
Summing together, we get a representation of $M_K$ as a sum of $2^{O(K^2)}$ iterated integrals of forms of degree at most $2^{O(K)}\left(d+O(n^3)\right)$, with rational in $ p$ coefficients of degree at most $ O(Kn^3)$, with common denominator of degree $O(Kn^3)$.

Applying Proposition~\ref{prop:iter Pertov in p}, we represent each of these iterated integrals as in \eqref{eq:iter Petrov}, and the coefficients of these representations have degrees in $p$  at most $2^{O(K)}(d+n^3)$. Summing these representations together, we arrive to the statement of the Proposition.\end{proof}

Multiplying by the common denominator, we see that the estimate of Theorem~\ref{num-of-zeroes-m-k} follow from the following
\begin{lemma}\label{lem:poly env}
Linear combination $\sum_{i=1}^{N_K}h_iI_i$ of iterated integrals of length at most $K$, with coefficients $h_i$ being polynomial in $p$ of degree at most $\mu$, has at most $\exp\left(\exp\left(\mu^{O(1)} n^{O(K)}\right)\right) $ zeros on each line $\tilde{\lambda}\times \C_p$.
\end{lemma}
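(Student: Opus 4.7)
The plan is to reduce the lemma to Theorem~\ref{th:upper-bound}, which bounds zeros of traces $\tr(BX)$ with \emph{constant} matrix $B$ and $X$ the fundamental matrix of an integrable, regular, quasi-unipotent connection. Since our combination $\sum_i h_i(p)\,I_i$ has polynomial (not constant) coefficients in $p$, the first step is to absorb the $p$-dependence of the $h_i$ into an enlarged Picard-Fuchs system by tensoring with a polynomial connection.

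Explicitly, on the trivial rank $(\mu+1)$-bundle over the $p$-line $\C P^1$, I would take the connection $\Omega_p = A\,dp$, where $A$ is the nilpotent matrix with $A_{i,i-1}=i$ and other entries zero. Then $V(p)=(1,p,p^2,\dots,p^\mu)^T$ is a horizontal section, and since $A$ is nilpotent every horizontal section is a polynomial in $p$ of degree at most $\mu$. Form the tensor-sum connection
$$
\hat{\Omega} \,=\, \widetilde{\Omega}_K\otimes I_{\mu+1} \,+\, I_{N_K}\otimes\Omega_p
$$
on $B_n$, with fundamental matrix $\hat{S}=\widetilde{S}_K\otimes\Phi_p$, where $V$ is taken as the first column of $\Phi_p$. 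The entries of $\hat{S}$ then include all products $p^j I_i$; writing $h_i(p)=\sum_{j=0}^\mu c_{ij}p^j$, the combination of the lemma becomes $\tr(B\hat{S})$ for the constant matrix $B=(c_{ij})$ (the $c_{ij}$ may depend on $\tilde\lambda$, but are constants along each fixed line $\{\tilde\lambda\}\times\C P^1$).

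Integrability, regularity, and quasi-unipotency are preserved by tensor sums of connections: the curvature is additive, and the monodromy around any loop is the tensor product of the factor monodromies, whose eigenvalues are products of the original eigenvalues and hence remain roots of unity. The connection $\widetilde{\Omega}_K$ satisfies the three conditions by Proposition~\ref{prop:quasi-unipot regular}; $\Omega_p$ satisfies them because its horizontal sections are polynomials in $p$, hence single-valued (trivial monodromy, in particular quasi-unipotent) and of polynomial growth at $p=\infty$, which is all regularity requires. This step is the main conceptual point of the proof, but once the tensor-sum construction is in place the three BNY hypotheses follow from essentially standard facts about regular connections.

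For the quantitative bound, Proposition~\ref{prop:II sys compl} together with the explicit form of $\Omega_p$ gives dimension $l=N_K(\mu+1)\le\mu\,n^{O(K)}$, degree $d=O(K^6 n^8)$, and size $s\le(\mu+1)(Kn)^{O(K^6 n^7)}$, over a base of dimension $m=\dim P\widetilde{\C}_{n+1}[x,y]=O(n^2)$. Theorem~\ref{th:upper-bound} then yields $\mathcal{N}(\ell)\le s^{2^{C(dl^4m)^5}}$ for $\ell=\{\tilde\lambda\}\times\C P^1\subset B_n$. A direct computation gives $dl^4m=\mu^4\,n^{O(K)}$ and $\log s = n^{O(1)}+\log\mu$, whence $\log\log\mathcal{N}(\ell)=\mu^{O(1)}n^{O(K)}$, matching the double-exponential claim. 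The arithmetic bookkeeping is routine and I would expect no serious obstacle beyond the tensor-sum construction itself.
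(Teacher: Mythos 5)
Your proposal is correct and follows essentially the same route as the paper: the paper's proof also forms the Kronecker sum of $\widetilde{\Omega}_K$ with a connection whose horizontal sections are the polynomials in $p$ of degree at most $\mu$, reads off dimension $\mu N_K = \mu n^{O(K)}$, degree $O(K^6n^8)$ and size $\mu(Kn)^{O(K^6n^7)}$, and then applies Theorem~\ref{th:upper-bound}. Your write-up merely makes explicit the nilpotent polynomial connection and the preservation of integrability, regularity and quasi-unipotency under the tensor sum, details the paper leaves as ``standard.''
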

\begin{proof}
We can construct in a standard way a connection  whose horizontal sections are described by these functions. More exact, the connection will be a Kronecker sum of ${\widetilde{\Omega}_K}$ with the connection whose sections are polynomials in $p$ of degree at most $\mu$. Therefore it will be of dimension $\mu N_K=\mu n^{O(K)}$, of degree $O(K^6n^8)$ and of size $\mu (Kn)^{O(K^6n^7)}$. Theorem~\ref{th:upper-bound} then gives the required upper bound.
\end{proof}

Substitution of $\mu=2^{O(K)}dn^6 $ from Lemma~\ref{lem:Mk as lincomb} provides the estimate of the Theorem~\ref{num-of-zeroes-m-k}.

\bibliographystyle{amsalpha}
\def\MR{}
%\bibliography{tangent16,2003,2007}
\def\BbbR{$\mathbb R$}\def\BbbC{$\mathbb
  C$}\providecommand\cprime{$'$}\providecommand\mhy{--}\font\cyr=wncyr8\def\Bb%
bR{$\mathbb R$}\def\BbbC{$\mathbb
  C$}\providecommand\cprime{$'$}\providecommand\mhy{--}\font\cyr=wncyr9\def\Bb%
bR{$\mathbb R$}\def\BbbC{$\mathbb
  C$}\providecommand\cprime{$'$}\providecommand\mhy{--}\font\cyr=wncyr9\def\cp%
rime{$'$} \providecommand{\bysame}{\leavevmode\hbox
to3em{\hrulefill}\thinspace}
\providecommand{\MR}{\relax\ifhmode\unskip\space\fi MR }
% \MRhref is called by the amsart/book/proc definition of \MR.
\providecommand{\MRhref}[2]{%
  \href{http://www.ams.org/mathscinet-getitem?mr=#1}{#2}
} \providecommand{\href}[2]{#2}

\end{document}